\definecolor{codegray}{rgb}{0.95,0.95,0.95}
\definecolor{pykeyword}{rgb}{0.13,0.13,1}
\definecolor{pystring}{rgb}{0.58,0,0.82}
\newcommand{\lr}[1]{\left(#1\right)}
\lstdefinestyle{pythonstyle}{
    backgroundcolor=\color{codegray},
    language=Python,
    basicstyle=\ttfamily\small,
    keywordstyle=\color{pykeyword}\bfseries,
    stringstyle=\color{pystring},
    commentstyle=\color{gray},
    showstringspaces=false,
    numbers=left,
    numberstyle=\tiny,
    frame=single,
    breaklines=true,
    tabsize=4,
}
\DeclarePairedDelimiter\floor{\lfloor}{\rfloor}
\numberwithin{equation}{section}
\theoremstyle{plain}
\newtheorem{theorem}{Theorem}
\newtheorem{defn}[theorem]{Definition}
\newtheorem{coro}[theorem]{Corollary}
\newtheorem{lemma}[theorem]{Lemma}
\begin{document}
\title{On the visibility polynomial of graphs} 
\author{Tonny K B}
\address{Tonny K B, Department of Mathematics, College of Engineering Trivandrum, Thiruvananthapuram, Kerala, India, 695016.}
\email{tonnykbd@cet.ac.in}
\author{Shikhi M}
\address{Shikhi M, Department of Mathematics, College of Engineering Trivandrum, Thiruvananthapuram, Kerala, India, 695016.}
\email{shikhim@cet.ac.in}
\begin{abstract}In a simple graph $G(V,E)$ and any subset $X$ of $V$, two vertices $u$ and $v$ are said to be $X$-visible if there exists a shortest path between them such that none of the internal vertices on the path belong to the set $X$. A set $X$ is called a mutual-visibility set of $G$ if every pair of vertices in $X$ is $X$-visible.  The visibility polynomial of a graph $G$ is defined as $\mathcal{V} (G)=\sum_{i\geq 0} r_i x^{i}$ where $r_i$ denotes the number of mutual-visibility sets in $G$ of cardinality $i$. In this paper, the visibility polynomial is studied for some well-known graph classes. In particular, the instance at which the number of maximal mutual-visibility sets is equal for cycle graphs is identified. The visibility polynomial of the join of two graphs is studied. The algorithm for computing the visibility polynomial of a graph has been identified to have a time complexity of $O(n^32^n)$, making the problem computationally intensive for larger graphs.\end{abstract}
\subjclass[2010]{05C31, 05C39 }
\keywords{mutual-visibility set, visibility polynomial}
\maketitle
\section{Introduction}

    Let $G(V,E)$ be a simple graph and let $X\subseteq V$.  Two vertices $u,v\in V$ are said to be $X-$visible \cite{Stefano} if there exists a shortest path $P$ from $u$ to $v$ such that $V(P)\cap X \subseteq \{u, v\}$. A set $X$ is called a mutual-visibility set of $G$ if every pair of vertices in $X$ is $X$-visible.
 The cardinality of the largest mutual-visibility set of $G$ is called the mutual-visibility number of $G$, denoted by $\mu(G)$, and the number of such mutual-visibility sets of $G$ is denoted by $r_{\mu}(G)$. The number of mutual-visibility sets of order $k$ of $G$  having diameter $d$ is denoted by $\Theta_{k,d}(G)$. 

    The concept of mutual visibility in graphs has garnered growing interest because of its relevance in a wide range of theoretical and applied domains. Wu and Rosenfeld studied visibility problems in pebble graphs in \cite{Geo_convex_1} and \cite{Geo_convex_2}. In \cite{Stefano}, Di Stefano defined the mutual-visibility set in the context of graph theory, which provides a structural framework to understand how information, influence, or coordination can be maintained in systems where communication is constrained by topology.

Mutual visibility plays a crucial role in robotics, particularly in multi-agent systems where agents (robots) must reposition themselves to ensure that every pair has an unobstructed line of sight. This leads to distributed algorithms for visibility-based formation control, surveillance, and navigation in unknown or dynamic environments. For recent studies, see \cite{robotics1,robotics2,robotics3,robotics4,robotics5,robotics6,robotics7}. Other emerging applications include sensor networks, where mutual visibility among sensors affects coverage and connectivity, and visibility graphs are used to study geometric arrangements and motion planning.

A related concept is that of a general position set, independently introduced in \cite{GP_1, GP_3}, which refers to a subset of vertices in which no three distinct vertices lie on a common geodesic. A subset $S\subseteq V$ of a connected graph $G$
 is said to be a general position set if no vertex in $S$ lies on a shortest path between two other vertices of
$S$. The general position polynomial of a graph is introduced and studied in \cite{GP_2}.

The graph-theoretic concept of mutual visibility has attracted considerable attention and has been studied extensively in a series of works \cite{MV_1, MV_2, MV_3, MV_4, MV_5, MV_6, MV_7, MV_8,MV_9,TMV_1,TMV_2}. Some variants of mutual visibility have been introduced in \cite{MV_10}. In \cite{sandi}, B. Csilla et. al. introduced the visibility polynomial of a graph $G$ as a polynomial invariant that encodes information about the number of mutual-visibility sets within the graph.

\textbf{Contribution:} The visibility polynomial for certain classes of graphs is studied in section 4. The instance in which the number of maximal mutual-visibility sets are equal for cycle graphs is identified.  In section 5, the visibility polynomial of the join of two graphs is studied.  Algorithmic results on finding the visibility polynomial of a graph are provided in section 6. With a time complexity of $O(n^32^n)$, computing the visibility polynomial is challenging, making theoretical analysis crucial for deeper insights and simplification.
\section{Notations and preliminaries}
In the present paper, $G(V,E)$ represents a simple and undirected graph with vertex set $V$ and edge set $E$. Unless otherwise stated, all graphs in this paper are assumed to be connected, so that there exists at least one path between every pair of vertices. We follow the standard graph-theoretic definitions and notation as presented in \cite{Harary}.

The complement of a graph $G$ is the graph $\overline{G}$ having the same set of vertices as that of $G$ and two vertices in $\overline{G}$ are adjacent if and only if they are not adjacent in $G$. $K_n$ denotes a complete graph on $n$ vertices and ${n\choose 2}$ edges in which there is an edge between any pair of distinct vertices. A sequence of vertices $(u_0,u_1,u_2,\ldots,u_{n})$ is referred to as a $(u_0,u_n)$-path if $u_iu_{i+1}\in E(G)$, $\forall i\in\{0,1,\ldots,(n-1)\}$.  $C_n$ denotes a cycle (or circuit) which is a path $(u_0,u_1,u_2,\ldots, u_n)$ together with an edge $u_0u_n$. A graph is bipartite if its vertex set can be partitioned into two subsets $X$ and $Y$ so that any edge of $G$ has one end vertex in $X$ and the other in $Y$. If each vertex of $X$ is joined to every vertex of $Y$ in a bipartite graph, it is called a complete bipartite graph, denoted by $K_{m,n}$. 
Two graphs $G$ and $H$ are isomorphic if there exists a bijection $\phi:V(G)\rightarrow V(H)$ such that $uv \in E(G)$ if and only if $\phi(u)\phi(v) \in E(H)$ for all $u, v \in V(G)$. We denote this by $G\cong H$.

 The distance between two vertices $ u $ and $ v $ in $G$  is denoted by $ d_G(u,v)$, which is the length of the shortest $ (u,v)$-path in $ G$. The maximum distance between any pair of vertices of $G$  is called the diameter of  $G$, denoted by $diam(G)$. Let $X\subseteq V(G)$. Then the induced subgraph $G[X]$ of $G$ by $X$ is the graph with vertex set $X$ and with the edges of $G$ having both endpoints in $X$. The diameter of $G[X]$ in $G$ is denoted by  $diam_G(X)$  and is defined by, $diam_G(X) = \displaystyle\max_{u, v \in X} d_G(u, v)$. 
 
 A clique is a subset of vertices in a graph where every pair of distinct vertices is connected by an edge. A clique is a complete subgraph within a graph. A clique with $k$ vertices is a $k-$clique and the number of $k-$cliques in a graph $G$ is denoted by $c_k(G)$.
 
The graph $G\backslash e$ denotes the graph $G(V, E-\{e\})$ derived from $G$ by deleting a single edge $e$ from $G$. If $G$ and $H$ are two graphs, then the join, $G \vee H$ is the graph with the vertex set $V(G)\cup V(H)$ and the edge set $E(G) \cup E(H)\cup \lbrace uv:u \in V(G), v\in V(H) \rbrace$. Note that, for any graphs $G$ and $H$, $diam(G\vee H)=2$.
\section{Visibility polynomial of graphs}
Let $G$ be a graph of order $n$. Then the visibility polynomial, $\mathcal{V}(G)$, of  $G$  is defined \cite{sandi} as
 $$\mathcal{V}(G)=\sum_{i\geq 0} r_i x^{i}$$  where $r_i$ denote  the number of mutual-visibility sets in $G$ of cardinality $i$. Note that the degree of visibility polynomial of $G$ is $\mu(G)$. Since every set of at most two vertices forms a mutual-visibility set, the visibility polynomial of a graph of order $n$ always begins with $1+nx+{n\choose 2}x^2+\cdots$

 Since mutual visibility depends on the adjacency relations in a graph, the visibility polynomial is an isomorphism invariant, meaning that isomorphic graphs always have the same visibility polynomial. But it is not a complete isomorphism invariant, since there exist non-isomorphic graphs that share the same visibility polynomial. For example, the non-isomorphic graphs in Figure \ref{non_isomorphic} have the same visibility polynomial; $\mathcal{V}(G_1)=\mathcal{V}(G_2)=1+4x+6x^2+4x^3$. 
\begin{figure}[h]
    \centering
\begin{tikzpicture}[scale=1.5]
  \begin{scope}[xshift=-2cm]
    \node[circle, fill=black, inner sep=2pt] (G1) at (0,0) {};
    \node[circle, fill=black, inner sep=2pt] (G2) at (2,0) {};
    \node[circle, fill=black, inner sep=2pt] (G3) at (2,1) {};
    \node[circle, fill=black, inner sep=2pt] (G4) at (0,1) {};

     \draw[line width=1pt] (G1) -- (G2);
    \draw[line width=1pt] (G2) -- (G3);
    \draw[line width=1pt] (G3) -- (G4);
    \draw[line width=1pt] (G4) -- (G1);
    \draw[line width=1pt] (G4) -- (G2);
    \node at (1,-0.3) {$G_1$};
  \end{scope}

  \begin{scope}[xshift=2cm]
    \node[circle, fill=black, inner sep=2pt] (H1) at (0,0) {};
    \node[circle, fill=black, inner sep=2pt] (H2) at (2,0) {};
    \node[circle, fill=black, inner sep=2pt] (H3) at (2,1) {};
    \node[circle, fill=black, inner sep=2pt] (H4) at (0,1) {};
    
    \draw[line width=1pt] (H1) -- (H2);
    \draw[line width=1pt] (H2) -- (H3);
    \draw[line width=1pt] (H3) -- (H4);
    \draw[line width=1pt] (H4) -- (H1);
    \node at (1,-0.3) {$G_2$};
  \end{scope}
   \end{tikzpicture}
 \caption{Non-isomorphic graphs with the  same visibility polynomial}
    \label{non_isomorphic}
\end{figure}
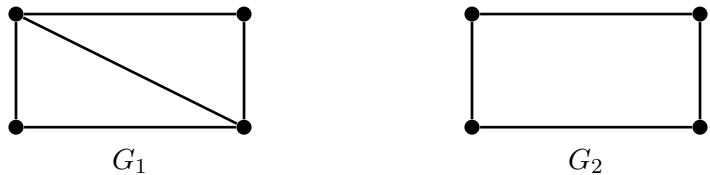

Using a Python program and high performance computing facilities, we identified all non-isomorphic graphs of order up to 9 that have identical visibility polynomials. In Table \ref{ch2.tbl1}, we summarize the size of the largest such groups and its visibility polynomials. For example, among 11,117 non-isomorphic graphs, only up to 14 possess identical visibility polynomials,  indicating that the visibility polynomial is a highly discriminative invariant for graph comparison. In Table \ref{ch2.tbl1}, we use the following notations: $n$ denotes the order of the graph, $T(n)$ represents the total number of non-isomorphic graphs of order $n$, and $M(n)$ indicates the maximum number of graphs sharing the same visibility polynomial.
 \begin{table}[htbp]
    \centering
    \caption{Non-isomorphic graphs with identical visibility polynomial}\label{ch2.tbl1}
    \begin{tabularx}{0.9\linewidth}{@{}XXXl@{}}
        \toprule
       n & T(n) & M(n) & Polynomial \\
        \midrule
        4 & 6 & 2& $1+4x+6x^2+4x^3$ \\
        5 & 21 & 2 &  $1+5x+10x^2+7x^3$\\
        6 & 112 & 4 & $1+6x+15x^2+14x^3+3x^4$ \\
        7 & 853 & 6 & $1+7x+21x^2+26x^3+9x^4$ \\
        8 & 11117 & 14 & $1+8x+28x^2+52x^3+46x^4+12x^5$ \\
    9 & 261080 & 59 & $1+ 9x+ 36x^2+82x^3+107x^4+66x^5+13x^6$\\
        \bottomrule
    \end{tabularx}
\end{table}

\begin{theorem}\label{theorem19}
    Let $G$ be a disconnected graph with 2 components $G_1$ and $G_2$, then  $\nu(G)= \nu(G_1)+\nu(G_2)-1$.
\end{theorem}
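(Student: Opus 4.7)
The plan is to show that in a disconnected graph every mutual-visibility set must lie entirely inside one of the two components, after which the formula is just an inclusion-exclusion bookkeeping step on the empty set. I will treat $\nu(G)$ as the visibility polynomial $\mathcal{V}(G)$ defined in Section 3.

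First, I would record the simple topological fact that if $u \in V(G_1)$ and $v \in V(G_2)$, then there is no $(u,v)$-path in $G$ at all, hence no shortest one. In particular, for any set $X$ containing both $u$ and $v$, the pair $\{u,v\}$ cannot be $X$-visible, so such an $X$ fails the mutual-visibility condition. Consequently, every mutual-visibility set $X$ of $G$ is forced to satisfy $X \subseteq V(G_1)$ or $X \subseteq V(G_2)$.

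Next I would verify the obvious ``locality'' of visibility: for $X \subseteq V(G_i)$, the set $X$ is a mutual-visibility set of $G$ if and only if it is a mutual-visibility set of $G_i$. The reason is that every $(u,v)$-path in $G$ with $u,v \in V(G_i)$ is contained in $V(G_i)$ (since $G_i$ is a connected component), so the collection of shortest $(u,v)$-paths in $G$ coincides with the collection of shortest $(u,v)$-paths in $G_i$, and the condition $V(P)\cap X \subseteq \{u,v\}$ is identical in the two graphs.

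Finally I would count coefficient by coefficient. For each $i \geq 1$, the mutual-visibility sets of $G$ of cardinality $i$ split as the disjoint union of those contained in $V(G_1)$ and those contained in $V(G_2)$, so $r_i(G) = r_i(G_1) + r_i(G_2)$. For $i = 0$ the graph $G$ has exactly one mutual-visibility set, the empty set, while the constant terms of $\mathcal{V}(G_1)$ and $\mathcal{V}(G_2)$ each equal $1$, so summing them double-counts $\varnothing$. Adding a correction of $-1$ gives
\[
\mathcal{V}(G) \;=\; \mathcal{V}(G_1) + \mathcal{V}(G_2) - 1.
\]
There is no real obstacle; the only subtle point is precisely this empty-set double count, which the $-1$ removes.
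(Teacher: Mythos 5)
Your proposal is correct and follows the same route as the paper: every mutual-visibility set must lie within a single component since no path joins the two components, so the polynomials add, with $-1$ correcting for the doubly counted empty set. Your explicit check that visibility within a component is the same in $G$ as in $G_i$ is a small extra rigor the paper leaves implicit, but the argument is essentially identical.
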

\begin{proof}
Because $G$ is disconnected, with 
$G_1$ and $G_2$ as its components, any mutual-visibility set must be entirely contained within one of the components. No such set can include vertices from both $G_1$ and $G_2$, as there are no paths connecting them. Hence, the visibility polynomial of $G$ is obtained by adding the visibility polynomials of 
$G_1$ and $G_2$, and then subtracting 1 to account for the fact that the empty set is counted twice. Hence the result.
\end{proof}
This result may be generalized in the case of a disconnected  graph with $m $ components $G_1,G_2,\ldots,G_m$ as follows:
\begin{theorem}\label{theorem122}
If $G$ is disconnected graph with $m$ components $G_1, G_2, \ldots ,G_m$, then $\mathcal{V}(G)= \mathcal{V}(G_1)+\mathcal{V}(G_2)+\cdots+\mathcal{V}(G_m)-m+1$. 
\end{theorem}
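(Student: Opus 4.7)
The plan is to induct on $m$, with Theorem~\ref{theorem19} serving as the base case ($m=2$). For the inductive step I would group the first $m-1$ components into a single (disconnected) subgraph $G' = G_1 \cup G_2 \cup \cdots \cup G_{m-1}$, so that $G$ decomposes into the two pieces $G'$ and $G_m$ with no edges between them. Theorem~\ref{theorem19} then yields $\mathcal{V}(G) = \mathcal{V}(G') + \mathcal{V}(G_m) - 1$, and the inductive hypothesis applied to $G'$ gives $\mathcal{V}(G') = \mathcal{V}(G_1) + \cdots + \mathcal{V}(G_{m-1}) - (m-1) + 1$. Substituting and simplifying produces the claimed identity.

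A cleaner alternative is a one-shot counting argument that bypasses the induction altogether. Any nonempty mutual-visibility set of $G$ must lie entirely inside one component, because vertices in distinct components admit no shortest path and hence fail to be $X$-visible; conversely, a mutual-visibility set of some $G_i$ remains mutual-visible in $G$, since distances within a component are the same in $G_i$ and in $G$. Thus, for every $k \geq 1$, the coefficient of $x^k$ in $\mathcal{V}(G)$ equals the sum of the corresponding coefficients in $\mathcal{V}(G_1),\ldots,\mathcal{V}(G_m)$. Only the constant term needs correction: $\mathcal{V}(G)$ contributes $1$ for the empty set, while the right-hand side contributes $m$ (one empty set counted per component), so we subtract $m-1$.

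Neither approach presents a genuine obstacle; the proof is essentially bookkeeping. The only point worth flagging, if one takes the inductive route, is that the argument in Theorem~\ref{theorem19} never uses connectedness of the two pieces — only the absence of paths between them — so applying it with the (disconnected) $G'$ playing the role of a ``component'' is legitimate. The direct counting proof avoids this technicality entirely and is probably the version I would include in the paper.
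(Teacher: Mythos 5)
Your proposal is correct, and your preferred direct-counting argument is essentially the paper's own reasoning: the paper proves the two-component case by observing that every nonempty mutual-visibility set lies in a single component and that the empty set is double-counted, then states the $m$-component version without further proof as the obvious extension of that same argument. Your inductive alternative is also valid (and you rightly note that Theorem~\ref{theorem19}'s proof never uses connectedness of the two pieces), but it adds nothing over the one-shot count.
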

In \cite{sandi}, B. Csilla et al. showed that the visibility polynomial of a path graph of order $n \geq 2$ is given by
$\mathcal{V}(P_n)= 1+nx+{n \choose 2}x^2$.
\begin{coro}
     Let $e\in V(P_n)$ be such that $P_n\setminus e\cong P_{n_1}\cup P_{n_2}$. Then $\mathcal{V}(P_n\setminus e)=\mathcal{V}(P_n)-n_1n_2x^2$.
\end{coro}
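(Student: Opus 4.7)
The plan is to combine Theorem~\ref{theorem19} with the known formula $\mathcal{V}(P_k) = 1 + kx + \binom{k}{2}x^2$ for the visibility polynomial of a path. Since removing an edge $e$ from $P_n$ disconnects the path into exactly two sub-paths $P_{n_1}$ and $P_{n_2}$ with $n_1 + n_2 = n$, Theorem~\ref{theorem19} immediately yields
\[
\mathcal{V}(P_n \setminus e) \;=\; \mathcal{V}(P_{n_1}) + \mathcal{V}(P_{n_2}) - 1.
\]

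Next, I would substitute the explicit path formula on the right-hand side. Adding $1 + n_1 x + \binom{n_1}{2}x^2$ to $1 + n_2 x + \binom{n_2}{2}x^2$ and subtracting $1$ produces $1 + nx + \bigl(\binom{n_1}{2} + \binom{n_2}{2}\bigr)x^2$. The final step is to rewrite the coefficient of $x^2$ using the elementary identity $\binom{n_1 + n_2}{2} = \binom{n_1}{2} + \binom{n_2}{2} + n_1 n_2$, so that $\binom{n_1}{2} + \binom{n_2}{2} = \binom{n}{2} - n_1 n_2$. Matching this against $\mathcal{V}(P_n) = 1 + nx + \binom{n}{2}x^2$ gives exactly $\mathcal{V}(P_n \setminus e) = \mathcal{V}(P_n) - n_1 n_2 x^2$.

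There is no serious obstacle here: the argument is a direct substitution, and the only point worth flagging is the combinatorial interpretation of the correction term $n_1 n_2 x^2$. These are precisely the $2$-subsets of $V(P_n)$ consisting of one vertex from each side of the removed edge $e$; such pairs form mutual-visibility sets in $P_n$ but are no longer even connected in $P_n \setminus e$, so they disappear from the $x^2$ coefficient. This sanity check confirms that the algebraic calculation is accounting for the right objects.
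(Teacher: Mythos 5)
Your proof is correct and follows essentially the same route as the paper: apply Theorem~\ref{theorem19} to the two components $P_{n_1}$ and $P_{n_2}$, substitute $\mathcal{V}(P_k)=1+kx+\binom{k}{2}x^2$, and use $\binom{n_1+n_2}{2}=\binom{n_1}{2}+\binom{n_2}{2}+n_1n_2$. The closing combinatorial interpretation of the $n_1n_2x^2$ term is a pleasant addition but not needed.
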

\begin{proof}
    Since $P_n\setminus e\cong P_{n_1}\cup P_{n_2}$, it follows from Theorem \ref{theorem19} that $\mathcal{V}(P_n\setminus e)=\mathcal{V}(P_{n_1}\cup P_{n_2})=\mathcal{V}(P_{n_1})+\mathcal{V}(P_{n_2})-1=\left(1+n_1x+{n_1\choose 2}x^2\right)+\left(1+n_2x+{n_2\choose 2}x^2\right)-1=1+nx+{n\choose 2}x^2-n_1n_2x^2=\mathcal{V}(P_n)-n_1n_2x^2$.
    Hence, the result.
\end{proof}
\section{Visibility polynomial of some graph classes}
\begin{theorem}\label{theorem12}  The visibility polynomial of the complete graph of order $n$ is given by
$\mathcal{V}(K_n)= (1+x)^n$.
\end{theorem}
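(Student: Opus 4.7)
The plan is to show that every subset of $V(K_n)$ is a mutual-visibility set, and then apply the binomial theorem.

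First I would observe that in $K_n$, every pair of distinct vertices is joined by an edge, so for any $u,v \in V(K_n)$ we have $d_{K_n}(u,v)=1$ and the unique shortest $(u,v)$-path is the edge $uv$ itself, which has no internal vertices. Consequently, for any subset $X \subseteq V(K_n)$ and any pair $u,v \in X$, the shortest path between them satisfies $V(P)\cap X = \{u,v\} \subseteq \{u,v\}$ trivially, so $u$ and $v$ are $X$-visible. Hence every subset of $V(K_n)$ is a mutual-visibility set.

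Next, for each $i$ with $0 \le i \le n$, the number of subsets of $V(K_n)$ of cardinality $i$ is $\binom{n}{i}$, so $r_i = \binom{n}{i}$. Substituting into the definition of the visibility polynomial gives
\[
\mathcal{V}(K_n) = \sum_{i=0}^{n} \binom{n}{i} x^i = (1+x)^n
\]
by the binomial theorem, completing the proof.

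There is essentially no obstacle here: the argument is a one-line observation that shortest paths in $K_n$ are edges, so the visibility condition is vacuous. The only thing to be careful about is handling $i=0$ (the empty set is vacuously a mutual-visibility set, contributing the constant term $1$), which matches the convention $r_0 = 1$ used throughout the paper.
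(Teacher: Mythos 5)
Your proof is correct and follows essentially the same approach as the paper: both observe that every pair of vertices in $K_n$ is joined by an edge with no internal vertices, so every subset is a mutual-visibility set, giving $r_i=\binom{n}{i}$ and hence $(1+x)^n$ by the binomial theorem.
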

\begin{proof}
Let $G(V,E)$ be a complete graph of order $n$. For $1\leq i \leq n$, consider any subset $S=\{v_1,v_2,\ldots, v_i\}$ of $V$. Then the shortest path between any two vertices $v_j,v_k\in S$ is the edge $v_jv_k$. Since the path has no internal vertices, $S$ becomes a mutual-visibility set. Since we have ${n\choose i}$ choices for $S$, $r_i={n\choose i}$. Hence, $\mathcal{V}(K_n)= \displaystyle \sum_{i=0}^n{n \choose i} x^i=(1+x)^n$. 
\end{proof}
\begin{theorem}
    The visibility polynomial of a star graph $S_n,\ (n\geq 0)$ of order $n+1$ is given by $\mathcal{V}(S_n)=x+nx^2+(1+x)^n.$
\end{theorem}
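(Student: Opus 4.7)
The plan is to split the mutual-visibility sets of $S_n$ into two groups, depending on whether the center is included, and count each group directly. Let $c$ denote the center of $S_n$ and let $v_1,\ldots,v_n$ denote the leaves. The key structural observation is that for any two distinct leaves $v_i,v_j$, the unique shortest $(v_i,v_j)$-path is $v_i\,c\,v_j$, whose only internal vertex is $c$.

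For the first group, I would consider subsets $X\subseteq\{v_1,\ldots,v_n\}$, i.e.\ sets avoiding the center. For any two vertices $v_i,v_j\in X$, the shortest path between them has $c$ as its only internal vertex, and $c\notin X$, so $v_i$ and $v_j$ are $X$-visible. Thus every such subset is a mutual-visibility set, contributing exactly $\sum_{k=0}^{n}\binom{n}{k}x^{k}=(1+x)^n$ to $\mathcal{V}(S_n)$.

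For the second group, suppose $c\in X$. The singleton $\{c\}$ is trivially a mutual-visibility set, and any two-element set $\{c,v_i\}$ is too, since $c$ and $v_i$ are adjacent and the path has no internal vertex; this yields $x+nx^2$. On the other hand, if $X$ contains $c$ together with two distinct leaves $v_i,v_j$, then the only shortest $(v_i,v_j)$-path passes through $c\in X$, violating visibility. Hence no mutual-visibility set containing $c$ has more than one leaf.

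Summing the two contributions gives $\mathcal{V}(S_n)=(1+x)^n+x+nx^2$, as claimed. I do not anticipate a real obstacle here: the argument is essentially a case split, and the only thing worth double-checking is the degenerate cases $n=0$ (where $S_0$ is a single vertex and the formula reduces to $1+x$) and $n=1$ (where $S_1\cong K_2$ and the formula matches $(1+x)^2$), both of which agree with the stated expression.
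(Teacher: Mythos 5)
Your proof is correct and follows essentially the same route as the paper: both rest on the observation that the center is the unique internal vertex of every leaf-to-leaf geodesic, so all $2^n$ leaf-subsets are mutual-visibility sets while the center can only appear in sets of size at most two. Your organization by ``contains the center or not'' is a slightly cleaner packaging of the same count the paper performs cardinality by cardinality, and your check of the degenerate cases $n=0,1$ is a nice addition.
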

\begin{proof}
Here $r_1=n+1$ and $r_2={n+1\choose 2}$. Let $T=\{v_1,v_2,\ldots,v_n\}$ be the set of all pendant vertices of $S_n$ and let $v_{n+1}$ be the centre vertex of degree $n$. Since any shortest path from $v_i$ to $v_j$, where $i,j\in \{1,2,\ldots,n\}$ includes the vertex $v_{n+1}$, $v_{n+1}$ cannot be included in any mutual-visibility set of cardinality greater than 2. Let $S$ be any subset of $T$ having cardinality $i>2$. Then the shortest path between any two vertices $v_i$ and $v_j$ of $S$ is $(v_i,v_{n+1},v_j)$, which includes no other vertices of $S$ other than $v_i$ and $v_j$ so that $S$ is a mutual-visibility set in $G$. Hence, the number of mutual-visibility sets of cardinality $i>2$ is given by ${n\choose i}$. Hence, it follows that 
\begin{align*}
    \mathcal{V}(S_n)&=1+(n+1)x+{n+1\choose 2}x^2+\sum_{i=3}^n{n\choose i}x^i\\
    &=1+nx+x+\left[{n\choose 2}+{n\choose 1}\right]x^2+\sum_{i=3}^n{n\choose i}x^i\\
   & =x+nx^2+\sum_{i=0}^n{n\choose i}x^i=x+nx^2+(1+x)^n
\end{align*}
Hence the proof.
\end{proof}
\begin{theorem}\label{ch2.th7}The visibility polynomial of a cycle of order $n \geq 3$ is given by
$\mathcal{V}(C_n)=1+nx+{n \choose 2}x^2 +r_3x^3$ where
$$r_3 = \left\{
  \begin{array}{ll}
  \dfrac{ n(n^2-1) }{24}& \text{if } n \text{ is odd} \\[10pt]
  \dfrac{(n-2)n(n+8)}{24} & \text{if } n \text{ is even}
  \end{array}
\right.$$
\end{theorem}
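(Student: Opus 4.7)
Label the vertices $v_0,v_1,\dots,v_{n-1}$ in cyclic order. The only paths in $C_n$ between two vertices are the two arcs of the cycle, so $d(v_i,v_j)=\min(t,n-t)$ where $t=|i-j|\bmod n$, and the shortest path is unique except when $n$ is even and the pair is antipodal. The coefficients $r_0=1$, $r_1=n$, $r_2=\binom{n}{2}$ are immediate from the general remarks following the definition in Section~3, so I only need to justify $r_3$ and show that $r_i=0$ for $i\geq 4$.

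\textbf{Vanishing beyond cardinality three.} I would first observe that for any four vertices $v_{i_1},v_{i_2},v_{i_3},v_{i_4}$ listed in cyclic order, the two arcs joining $v_{i_1}$ and $v_{i_3}$ contain $v_{i_2}$ and $v_{i_4}$ respectively as internal vertices, so no shortest path between $v_{i_1}$ and $v_{i_3}$ can avoid the other chosen vertices. Hence no set of cardinality $\geq 4$ is a mutual-visibility set, giving $r_i=0$ for $i\geq 4$.

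\textbf{Characterizing good triples.} A triple $X=\{u,v,w\}$ splits $C_n$ into three arcs of positive integer lengths $a,b,c$ with $a+b+c=n$. For each pair in $X$, one joining arc has length $s\in\{a,b,c\}$ and does not contain the third vertex, while the complementary arc has length $n-s$ and does. The pair is $X$-visible iff the arc of length $s$ is a shortest path, i.e.\ $s\leq n-s$, equivalently $s\leq n/2$. Therefore
\[
X \text{ is a mutual-visibility set} \iff \max\{a,b,c\}\leq n/2.
\]
Note this naturally treats the even case $s=n/2$: the arc not containing the third vertex still realizes the distance, so the pair remains visible.

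\textbf{Counting $r_3$.} I would count the complement. Because $a+b+c=n$, at most one of the three arcs can exceed $n/2$, so every \emph{bad} triple has a unique long arc of some length $\ell\in\{\lfloor n/2\rfloor+1,\dots,n-2\}$. Specifying such a triple amounts to choosing the clockwise starting vertex of the long arc ($n$ choices), its length $\ell$, and the third vertex from the $n-\ell-1$ interior positions of the complementary arc; since the long arc is unique, no overcounting occurs. Summing,
\[
\#\text{bad} \;=\; n\sum_{\ell=\lfloor n/2\rfloor+1}^{n-2}(n-\ell-1) \;=\; n\binom{n-\lfloor n/2\rfloor-1}{2},
\]
and $r_3=\binom{n}{3}-\#\text{bad}$. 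Substituting $n=2k+1$ gives $r_3=\binom{n}{3}-n\binom{k}{2}$ and a routine simplification with common denominator $24$ factors as $\tfrac{n(n-1)(n+1)}{24}=\tfrac{n(n^2-1)}{24}$. Substituting $n=2k$ gives $r_3=\binom{n}{3}-n\binom{k-1}{2}$, which simplifies analogously to $\tfrac{(n-2)n(n+8)}{24}$.

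\textbf{Main obstacle.} The conceptual core is the characterization $\max\{a,b,c\}\leq n/2$, in particular the boundary case $s=n/2$ when $n$ is even, which must be admitted among good triples and is exactly what produces the discrepancy between the two parity-dependent formulas. Once that is set, the computation of $r_3$ is bookkeeping: counting bad triples via their distinguished long arc and simplifying a degree-three polynomial in $n$.
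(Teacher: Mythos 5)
Your proposal is correct, and it takes a genuinely different route from the paper. The paper fixes a labeling $C_n=(v_1,\dots,v_n)$, orders the triple as $i<j<k$, and runs a multi-case analysis on the admissible ranges of $i$, $j$, $k$ (four cases for even $n$, two for odd $n$), finishing with nested summations evaluated separately for each parity; it also simply cites Di Stefano for $\mu(C_n)=3$ rather than proving $r_i=0$ for $i\ge 4$. You instead (a) give a short direct argument that no four vertices can be mutually visible, and (b) reduce the triple count to the clean structural criterion that $X$ is a mutual-visibility set iff all three arc lengths $a,b,c$ (with $a+b+c=n$) satisfy $\max\{a,b,c\}\le n/2$, then count the bad triples via their unique long arc. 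This buys a parity-uniform argument in which the even/odd split only enters at the final substitution $\lfloor n/2\rfloor=k$ versus $k-1$, it isolates the one genuinely delicate point (the antipodal case $s=n/2$, which is exactly where the two formulas diverge), and the complement count $n\binom{n-\lfloor n/2\rfloor-1}{2}$ avoids the paper's nested double sums. I checked your identification of the bad triples (at most one arc can exceed $n/2$ since the arcs sum to $n$, so the long arc is unique and the enumeration by its clockwise starting vertex, its length $\ell\in\{\lfloor n/2\rfloor+1,\dots,n-2\}$, and the $n-\ell-1$ choices of the third vertex is a bijection) and the resulting closed forms against the paper's values for $n=3,\dots,7$; everything agrees. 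The paper's approach, by contrast, is more self-contained in the sense of exhibiting the explicit shortest paths witnessing visibility in each index range, but at the cost of considerable bookkeeping.
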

\begin{proof}
Since $\mu(C_n)=3$ for $n\geq 3$ (See \cite{Stefano}), it is enough to find $r_3$.
To construct a mutual-visibility set of size 3, we label the vertices of $C_n$ as $\{v_1,v_2,\ldots, v_n\}$ such that $C_n=(v_1,v_2,\ldots,v_n)$. Let $M=\{v_i,v_j,v_k\}$ be a mutual-visibility set of size 3 in $C_n$. We will count the number of possible choices of $i,j$ and $k$ so that $M$ forms a mutual-visibility set. Assume that $i<j<k\leq n$, to avoid duplicate counting.

First, we consider the case when $n$ is even.
If $i>\frac{n}{2}$, then $j,k\in\left\{i+1,i+2,\ldots,n\right\}$ and the shortest path from $v_i$ to $v_k$ includes the vertex $v_j$ which is a contradiction, since $M$ is a mutual-visibility set. If $j>\frac{n}{2}+i$, then the shortest path from $v_i$ to $v_j$ includes the vertex $v_k$ which is again a contradiction. Hence, the possible ranges of $i$ and $j$ are  $1\leq i \leq \frac{n}{2}$ and $i+1\leq j \leq \frac{n}{2}+i$ respectively.

Now we consider the following cases of $i,j$ and will count the possible choices of $k$ so that $M$ forms a mutual visibility set.\\
Case 1: Let $1\leq i\leq \frac{n}{2}-1$ and  $i+1\leq j\leq \frac{n}{2}$. 
     We claim that $k\in\left\{\frac{n}{2}+i,\frac{n}{2}+i+1,\ldots, \frac{n}{2}+j\right\}$. Here, the shortest path from $v_i$ to $v_j$ becomes $(v_i,v_{i+1},v_{i+2},\ldots, v_j)$ which doesn't include the vertex $v_k$, the shortest path from $v_j$ to $v_k$ is $(v_j,v_{j+1},v_{j+2},\ldots, v_k)$ which doesn't include $v_i$ and the shortest path from $v_i$ to $v_k$ given by $(v_i,v_{i-1},\ldots ,v_1,v_n,v_{n-1},\ldots ,v_k)$  which doesn't include $v_j$. Hence $\{v_i,v_j,v_k\}$ becomes a mutual-visibility set. 

     If $k< \frac{n}{2}+i$, then the shortest path from $v_i$ to $v_k$ includes the vertex $v_j$ and if $k> \frac{n}{2}+j$, then the shortest path from $v_j$ to $v_k$ includes the vertex $v_i$. Hence the claim. \\
    Case 2: Let $1\leq i\leq \frac{n}{2}-1$ and  $\frac{n}{2}+1\leq j\leq \frac{n}{2}+(i-1)$. 
      We claim that $k\in\left\{\frac{n}{2}+i,\frac{n}{2}+i+1,\ldots, n\right\}$. Here, the shortest path from $v_i$ to $v_j$ becomes $(v_i,v_{i+1},v_{i+2},\ldots, v_j)$ which does not include the vertex $v_k$, the shortest path from $v_j$ to $v_k$ is $(v_j,v_{j+1},v_{j+2},\ldots, v_k)$ which does not include $v_i$ and the shortest path from $v_i$ to $v_k$ given by $(v_i,v_{i-1},\ldots, v_1,v_n,v_{n-1},\ldots, v_k)$ which does not include $v_j$. Hence $\{v_i,v_j,v_k\}$ becomes a mutual-visibility set. 
      
      If $k< \frac{n}{2}+i$, then the shortest path from $v_i$ to $v_k$ includes the vertex $v_j$ and the maximum possible value of $k$ is $n$ only. Hence, the claim. \\
      Case 3: Let $1\leq i\leq \frac{n}{2}-1$ and $j=\frac{n}{2}+i$.
      We claim that $k\in\left\{\frac{n}{2}+i+1,\frac{n}{2}+i+2,\ldots, n\right\}$. Here, the proof of the claim is exactly the same as in Case 2, but we exclude the case that $k=\frac{n}{2}+i$ since the vertex $v_{\frac{n}{2}+i}$ is already selected. \\
      Case 4: Let $i=\frac{n}{2}$.
      Then the possible choices of $j$ are $\frac{n}{2}+1,\frac{n}{2}+2,\ldots,n-1$ and the only possible choice for $k$ is $n$ only. Since, if $k\in \{j,j+1,\ldots,n- 1\}$, then the shortest path from $v_i$ to $v_k$ includes the vertex $v_j$.

 From the above cases, it follows that,
\begin{align*}
  r_3 &=\sum_{i=1}^{\frac{n}{2}-1}\left[\sum_{j=i+1}^{\frac{n}{2}} (j-i+1)+\sum_{j=\frac{n}{2}+1}^{\frac{n}{2}+i-1}\left(\frac{n}{2}-i+1\right)+\lr{\frac{n}{2}-i}\right]+\left(\frac{n}{2}-1\right)\\
  &=\sum_{i=1}^{\frac{n}{2}-1}\left[\sum_{k=2}^{\frac{n}{2}-i+1} k+\sum_{j=\frac{n}{2}+1}^{\frac{n}{2}+i-1}\left(\frac{n}{2}-i+1\right)+\lr{\frac{n}{2}-i}\right]+\left(\frac{n}{2}-1\right) \quad \text{( Let } k = j-i+1\text{)} \\
 &= \dfrac{1}{8}\sum_{i=1}^{\frac{n}{2}-1}\left[(n^2+6n-8)-4i-4i^2\right]+\left(\dfrac{n}{2}-1\right) \\
 &=\dfrac{1}{8}\left[\dfrac{(n-2)(n^2+6n-8)}{2}-\dfrac{(n-2)n}{2}-\dfrac{(n-2)(n-1)n}{6} \right]+\left(\dfrac{n}{2}-1\right)\\[3pt]
  &=\frac{n(n^2 + 6n - 16)}{24}=\dfrac{(n-2)n(n+8)}{24}
\end{align*} 
Now, we consider the case where $n$ is odd. If $i>\floor*{\frac{n}{2}}$, then $j,k\in\left\{i+1,i+2,\ldots, n\right\}$ and the shortest path from $v_i$ to $v_k$ includes the vertex $v_j$ which is a contradiction, since $M$ is a mutual-visibility set. If $j>\floor*{\frac{n}{2}}+i$, then the shortest path from $v_i$ to $v_j$ includes the vertex $v_k$, which is again a contradiction. Hence the possible ranges of $i$ and $j$ are  $1\leq i \leq \floor*{\frac{n}{2}}$ and $i+1\leq j \leq \floor*{\frac{n}{2}}+i$.

Now we consider the following cases of $i,j$ and will count the possible choices of $k$.\\
Case 1: Let $1\leq i\leq \floor*{\frac{n}{2}}$ and  $i+1\leq j\leq \floor*{\frac{n}{2}}+1$.\\
     We claim that $k\in\left\{\floor*{\frac{n}{2}}+i+1,\floor*{\frac{n}{2}}+i+2,\ldots, \floor*{\frac{n}{2}}+j\right\}$. Here, the shortest path from $v_i$ to $v_j$ becomes $(v_i,v_{i+1},v_{i+2},\ldots, v_j)$ which does not include the vertex $v_k$, the shortest path from $v_j$ to $v_k$ is $(v_j,v_{j+1},v_{j+2},\ldots, v_k)$ which does not include $v_i$, and there exists a shortest path from $v_i$ to $v_k$ given by $(v_i,v_{i-1},\ldots, v_1,v_n,v_{n-1},\ldots, v_k)$ which does not include $v_j$. Hence, $\{v_i,v_j,v_k\}$ becomes a mutual-visibility set. 

     If $k< \floor*{\frac{n}{2}}+i+1$, then the shortest path from $v_i$ to $v_k$ includes the vertex $v_j$ and if $k> \floor*{\frac{n}{2}}+j$, then the shortest path from $v_j$ to $v_k$ includes the vertex $v_i$. Hence, the claim. \\
     Case 2: Let $1\leq i\leq \floor*{\frac{n}{2}}$ and  $\floor*{\frac{n}{2}}+2\leq j\leq \floor*{\frac{n}{2}}+i$. \\
      We claim that $k\in\left\{\floor*{\frac{n}{2}}+i+1,\floor*{\frac{n}{2}}+i+2,\ldots, n\right\}$. Here, the shortest path from $v_i$ to $v_j$ becomes $(v_i,v_{i+1},v_{i+2},\ldots, v_j)$ which does not include the vertex $v_k$, the shortest path from $v_j$ to $v_k$ is $(v_j,v_{j+1},v_{j+2},\ldots, v_k)$ which does not include $v_i$, and there exists a shortest path from $v_i$ to $v_k$ given by $(v_i,v_{i-1},\ldots, v_1,v_n,v_{n-1},\ldots, v_k)$ which does not include $v_j$. Hence $\{v_i,v_j,v_k\}$ becomes a mutual-visibility set. 
      
      If $k\leq\floor*{\frac{n}{2}}+i$, then the shortest path from $v_i$ to $v_k$ includes the vertex $v_j$ and the maximum possible value of $k$ is $n$. Hence, the claim. 
 From the above cases, it follows that,
\begin{align*}
     r_3 &=\sum_{i=1}^{\floor*{\frac{n}{2}}}\left[\sum_{j=i+1}^{\floor*{\frac{n}{2}}+1} (j-i)+\sum_{j=\floor*{\frac{n}{2}}+2}^{\floor*{\frac{n}{2}}+i}\left(\floor*{\frac{n}{2}}-i+1\right)\right]\\
  &=\sum_{i=1}^{\floor*{\frac{n}{2}}}\left[\sum_{k=1}^{\floor*{\frac{n}{2}}-i+1} k+(i-1)\lr{\floor*{\frac{n}{2}}-i+1}\right] \quad \text{( Let } k = j-i\text{)}\\
  &= \sum_{i=1}^{\floor*{\frac{n}{2}}}
\left[
\dfrac{\left(\floor*{\dfrac{n}{2}} - i + 1\right)\left(\floor*{\dfrac{n}{2}} - i + 2\right)}{2}
+ (i - 1)\left(\floor*{\dfrac{n}{2}} - i + 1\right)
\right] \\
&= \sum_{m=1}^{\floor*{\frac{n}{2}}}
\left[
\frac{m(m + 1)}{2}
+ \left(\floor*{\dfrac{n}{2}} - m\right)m
\right]
\quad \text{( Let } m = \floor*{\dfrac{n}{2}} - i + 1\text{)} \\
&= \frac{\left\lfloor \dfrac{n}{2} \right\rfloor \left( \left\lfloor \dfrac{n}{2} \right\rfloor + 1 \right) \left(2 \left\lfloor \dfrac{n}{2} \right\rfloor + 1 \right)}{6}\\
&=\dfrac{\lr{\dfrac{n-1}{2}}\lr{\dfrac{n+1}{2}}n}{6}=\dfrac{ n(n^2-1) }{24}
\end{align*}
Hence the proof.
 \end{proof}
Next, we will discuss some properties of the number of maximal mutual-visibility sets of $C_n$.
\begin{lemma}\label{ch2.lem1}
 (i) $\{r_{\mu}(C_n) : n=1,3,5,\ldots\}$ is an increasing sequence.
(ii) $\{r_{\mu}(C_n) : n=2,4,6,\ldots\}$ is an increasing sequence.
\end{lemma}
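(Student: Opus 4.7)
The starting point is Theorem \ref{ch2.th7}. Since $\mu(C_n)=3$ for every $n\geq 3$, every maximal mutual-visibility set of $C_n$ has exactly three elements, and hence $r_{\mu}(C_n)$ coincides with the coefficient $r_3$ already computed there. The lemma thus reduces to showing that the two closed-form expressions
\[
f(n)=\frac{n(n^2-1)}{24}\quad(n\text{ odd}),\qquad g(n)=\frac{(n-2)n(n+8)}{24}\quad(n\text{ even}),
\]
are each strictly increasing when $n$ is advanced by $2$ within its parity class.

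My strategy is a direct algebraic check of positivity of the consecutive differences $f(n+2)-f(n)$ and $g(n+2)-g(n)$. Expansion and cancellation reduce these to simple non-negative polynomials: I expect to obtain $\tfrac{(n+1)^2}{4}$ in the odd case (after pulling out the common factor $(n+1)$ from $f(n)$ and $f(n+2)$) and $\tfrac{n(n+6)}{4}$ in the even case (after pulling out $n$). Both of these are manifestly positive in the required ranges, which establishes strict monotonicity of the two subsequences separately.

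There is no serious obstacle here: all the combinatorial content has already been absorbed into Theorem \ref{ch2.th7}, so what remains is a one-step monotonicity check per parity. A sanity comparison with the first few values (e.g.\ $r_{\mu}(C_n)=1,5,14,30$ for $n=3,5,7,9$ and $r_{\mu}(C_n)=4,14,32$ for $n=4,6,8$) confirms both the formulas and the strict increase, so the final write-up reduces to a pair of short algebraic simplifications.
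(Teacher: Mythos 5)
Your proposal is correct: the two differences do simplify exactly as you predict, namely $f(n+2)-f(n)=\tfrac{(n+1)^2}{4}$ after factoring out $(n+1)$ and $g(n+2)-g(n)=\tfrac{n(n+6)}{4}$ after factoring out $n$, both manifestly positive, and your numerical sanity values ($1,5,14,30$ and $4,14,32$) match the formulas. The paper takes essentially the same route — reduce to the closed forms from Theorem \ref{ch2.th7} and verify monotonicity of the two cubics — the only difference being that it checks monotonicity by differentiating the real polynomials ($f'(x)=\tfrac{3x^2-1}{24}>0$ for $x>2$, and similarly in the even case) rather than by taking discrete differences; your finite-difference version is marginally more elementary and yields the increments explicitly, but the two verifications are interchangeable.
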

\begin{proof}
(i) From Theorem \ref{ch2.th7}, $r_{\mu}(C_n)= \frac{ n(n^2-1) }{24}$ if $n$ is an odd integer. Consider $f(x)=\frac{ x(x^2-1) }{24}, \ \ x>2$, then $f'(x)=\frac{ 3x^2-1 }{24} $ and $3x^2-1$ is positive for $x>2$, which implies that $f(x)$ increases when $x>2$. Therefore,  $r_{\mu}(C_n)= \frac{ n(n^2-1) }{24}$ increases for odd integers $n$.\\
(ii) From Theorem \ref{ch2.th7}, $r_{\mu}(C_n)= \frac{(n-2)n(n+8)}{24}$ if $n$ is an even integer. Consider $f(x)=\frac{ (x-2)x(x+8) }{24}, \ \ x>3$, then $f'(x)=\frac{ 3x^2+12x-16 }{24} $. $f'(x) >0$ for $x>3$ which implies that $f(x)$ increases when $x>3$. Therefore, $r_{\mu}(C_n)= \frac{(n-2)n(n+8)}{24}$ increases for even integers $n$.
\end{proof}
\begin{lemma}\label{ch2.lem8}
 $r_{\mu}(C_{n})=r_{\mu}(C_{n+1})$ if and only if $n=6$.   
\end{lemma}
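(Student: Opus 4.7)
The plan is to reduce the claim to a direct algebraic comparison of the two closed-form expressions for $r_\mu(C_n)$ established in Theorem~\ref{ch2.th7}. Since $\mu(C_n)=3$ for all $n\geq 3$, the quantity $r_\mu(C_n)$ is exactly the coefficient $r_3$ computed there, namely $\frac{n(n^2-1)}{24}$ when $n$ is odd and $\frac{(n-2)n(n+8)}{24}$ when $n$ is even.

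The first step is to split the proof according to the parity of $n$, since $n$ and $n+1$ always have opposite parities, and the formula to be used on each side depends on this.

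In the first case, $n$ is even and $n+1$ is odd, so the equation $r_\mu(C_n)=r_\mu(C_{n+1})$ becomes
\begin{equation*}
\frac{(n-2)n(n+8)}{24} \;=\; \frac{(n+1)\bigl((n+1)^2-1\bigr)}{24} \;=\; \frac{n(n+1)(n+2)}{24}.
\end{equation*}
Cancelling the common factor $n/24$ and expanding reduces this to a linear equation in $n$, which I expect to yield $n=6$ as its unique solution. In the second case, $n$ is odd and $n+1$ is even, and the equation becomes
\begin{equation*}
\frac{n(n-1)(n+1)}{24} \;=\; \frac{(n-1)(n+1)(n+9)}{24}.
\end{equation*}
After cancelling the common factor $(n-1)(n+1)/24$ (nonzero for $n\geq 3$), this reduces to $n=n+9$, which has no solution.

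For the converse direction, I will simply verify by direct substitution that $r_\mu(C_6)=\frac{4\cdot 6\cdot 14}{24}=14$ and $r_\mu(C_7)=\frac{7\cdot 48}{24}=14$, confirming equality at $n=6$. There is no real obstacle here beyond careful bookkeeping of the two parity sub-cases; the strictly monotone behaviour established in Lemma~\ref{ch2.lem1} ensures that no additional solutions can be hiding within a single parity class, and the two case analyses above together cover every crossing between parities.
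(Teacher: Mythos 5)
Your proposal is correct and follows essentially the same route as the paper: split on the parity of $n$, equate the two closed-form expressions for $r_3$ from Theorem~\ref{ch2.th7}, and reduce to a linear equation that yields $n=6$ in the even case and no solution in the odd case. The only cosmetic difference is that you work directly with $n$ rather than substituting $n=2k$ or $n=2k-1$ as the paper does, which if anything makes the cancellation cleaner.
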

\begin{proof}
Consider the case where $n$ is even. Let $n=2k$, $k>1$. Then, $r_{\mu}(C_{n})=r_{\mu}(C_{n+1})$ if and only if $\frac{(2k-2)2k(2k+8)}{24} =\frac{ (2k+1)((2k+1)^2-1) }{24}$  if and only if $k=3$. When $n$ is odd, let $n=2k-1$, where $k>1$. Then $r_{\mu}(C_{n})=r_{\mu}(C_{n+1})$ if and only if $\frac{ (2k-1)((2k-1)^2-1) }{24} =\frac{(2k-2)2k(2k+8)}{24}$ if and only if $k=1$. That is a contradiction to the assumption that $k >1$. Therefore, $r_{\mu}(C_{n}) \neq r_{\mu}(C_{n+1})$ if $n$ is an odd integer. Hence, the result follows.
\end{proof}
\begin{lemma}\label{ch2.lem2}
  (i) If $n$ is odd, $r_{\mu}(C_n) < r_{\mu}(C_{n+1}) $. (ii) If $n$ is even and greater than $7$ , then
     $r_{\mu}(C_n) > r_{\mu}(C_{n+1})$. 
\end{lemma}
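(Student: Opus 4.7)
The plan is a direct computation using the closed forms for $r_{\mu}(C_n)$ established in Theorem \ref{ch2.th7}, splitting on parity exactly as in Lemma \ref{ch2.lem8}.

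For part (i), I would note that since $n$ is odd, $n+1$ is even. Applying the appropriate formulas from Theorem \ref{ch2.th7} gives
\[
r_{\mu}(C_n)=\frac{n(n-1)(n+1)}{24}, \qquad r_{\mu}(C_{n+1})=\frac{(n-1)(n+1)(n+9)}{24}.
\]
Both sides share the positive factor $(n-1)(n+1)/24$ (positive because $n\geq 3$), so the inequality $r_{\mu}(C_n)<r_{\mu}(C_{n+1})$ collapses to $n<n+9$, which is trivial. So (i) is essentially cost-free.

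For part (ii), $n$ is even and $n+1$ is odd, so
\[
r_{\mu}(C_n)=\frac{(n-2)n(n+8)}{24}, \qquad r_{\mu}(C_{n+1})=\frac{(n+1)n(n+2)}{24}.
\]
The common positive factor $n/24$ cancels, reducing $r_{\mu}(C_n)>r_{\mu}(C_{n+1})$ to $(n-2)(n+8)>(n+1)(n+2)$. Expanding both sides gives $n^2+6n-16>n^2+3n+2$, i.e.\ $3n>18$, or $n>6$. Since the hypothesis is $n$ even with $n>7$, we have $n\geq 8>6$, so the inequality holds. This approach also makes transparent why the boundary case $n=6$ from Lemma \ref{ch2.lem8} is excluded, since equality occurs exactly at $n=6$.

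There is no real obstacle here; the only thing to be careful about is matching the correct formula to the correct parity of each index and recording that the cancelled factors are strictly positive in the stated ranges. The content of the lemma is thus essentially an algebraic corollary of Theorem \ref{ch2.th7}, complementing Lemma \ref{ch2.lem8} by describing how $r_{\mu}(C_n)$ behaves as $n$ increases by one across the two parity classes.
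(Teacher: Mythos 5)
Your proof is correct and takes essentially the same approach as the paper: both arguments substitute the parity-appropriate closed forms from Theorem \ref{ch2.th7}, cancel a common positive factor, and reduce each inequality to a trivial linear comparison (the paper merely reparametrizes via $n=2k-1$ or $n=2k$ before doing the same cancellation). Your observation that equality in (ii) occurs exactly at $n=6$ is a nice cross-check against Lemma \ref{ch2.lem8}.
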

\begin{proof}
(i) If $n$ is odd, let $n=2k-1$, where $k\in \mathbb{N}$ and $k>1$. Then $r_{\mu}(C_{n})=\frac{ (2k-1)((2k-1)^2-1) }{24}=\frac{ (2k-1)(4k^2-4k) }{24}$ and $r_{\mu}(C_{n+1})=\frac{(2k-2)2k(2k+8)}{24}$.
Here, the result follows since $(2k-2)(k+4)-(2k-1)(k-1)=9k-9>0$ when $k>1$.\\
(ii) If $n$ is even and greater than $7$, let $n=2k$, where $k\in \mathbb{N}$ and $k>3$. Then $r_{\mu}(C_{n})=\frac{(2k-2)2k(2k+8)}{24}$ and $r_{\mu}(C_{n+1})=\frac{ (2k+1)4k(k+1) }{24}$. Here, the result follows since $(2k-2)(k+4)-(2k+1)(k+1)= 3k-9>0$ when $k>3$.    
\end{proof}
\begin{theorem}
    $r_{\mu}(C_{n_1})=r_{\mu}(C_{n_2})$ if and only if $n_1=6$ and $n_2=7$.
\end{theorem}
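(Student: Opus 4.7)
The plan is to combine the monotonicity and comparison lemmas \ref{ch2.lem1}, \ref{ch2.lem8}, \ref{ch2.lem2} to rule out all equalities other than $\{n_1,n_2\}=\{6,7\}$. The forward direction is immediate: substituting $n=6$ into the even formula and $n=7$ into the odd formula of Theorem \ref{ch2.th7} both yield $14$.

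For the converse, I would assume $r_\mu(C_{n_1})=r_\mu(C_{n_2})$ with $n_1<n_2$ (after relabelling) and split by the parities of $n_1,n_2$. If $n_1\equiv n_2 \pmod{2}$, strict monotonicity in Lemma \ref{ch2.lem1} immediately yields $r_\mu(C_{n_1})<r_\mu(C_{n_2})$, a contradiction. If $n_1$ is odd and $n_2$ is even, Lemma \ref{ch2.lem2}(i) gives $r_\mu(C_{n_1})<r_\mu(C_{n_1+1})$, and since $n_1+1$ and $n_2$ are both even with $n_1+1\le n_2$, Lemma \ref{ch2.lem1}(ii) extends this chain to $r_\mu(C_{n_1})<r_\mu(C_{n_2})$, again a contradiction. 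If $n_1$ is even, $n_2$ is odd, and $n_2=n_1+1$, then Lemma \ref{ch2.lem8} forces $n_1=6$ and $n_2=7$, giving the desired conclusion.

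The remaining sub-case, $n_1$ even with $n_2$ odd and $n_2\ge n_1+3$, is the main obstacle: it is not covered by any of the preceding lemmas, and Lemma \ref{ch2.lem2}(ii) actually points the wrong way, comparing only a cycle and its immediate odd successor. My plan here is to apply Lemma \ref{ch2.lem1}(i) to reduce to comparing $r_\mu(C_{n_1})$ with $r_\mu(C_{n_1+3})$, since $n_1+3$ is odd with $n_1+3\le n_2$, and then to verify the strict inequality $r_\mu(C_{n_1+3})>r_\mu(C_{n_1})$ directly from the closed forms of Theorem \ref{ch2.th7}. Writing $n_1=2k$, the difference simplifies to a quadratic in $k$ with all positive coefficients, so positivity is manifest and yields the required contradiction, completing the proof.
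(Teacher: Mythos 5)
Your proposal is correct and follows essentially the same route as the paper: parity-based monotonicity from Lemma \ref{ch2.lem1}, Lemma \ref{ch2.lem2}(i) to bridge odd-to-even, Lemma \ref{ch2.lem8} for consecutive indices, and the direct computation $r_\mu(C_{n+3})-r_\mu(C_n)=\tfrac{1}{24}(12k^2+84k+24)>0$ for even $n=2k$ to settle the remaining even-to-odd gap --- which is exactly the calculation the paper performs. Your case organization is, if anything, cleaner and more explicit than the paper's.
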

\begin{proof}
    Assume that $n_1 < n_2$. By Lemma \ref{ch2.lem1}, $r_{\mu}(C_n)$ increases for odd integers $n$. Therefore, they are all distinct. Similarly, $r_{\mu}(C_n)$ also increases and is distinct for even integers $n$. By Theorem \ref{theorem14}, $r_{\mu}(C_3)=1, r_{\mu}(C_4)=4, r_{\mu}(C_5)=5, r_{\mu}(C_6)=14$ and $ r_{\mu}(C_7)=14$. By Lemma \ref{ch2.lem1} and Lemma \ref{ch2.lem2}, it follows that $14\leq r_{\mu}(C_n) < r_{\mu}(C_{n+p})$  for all odd integers $n$ greater than or equal to 7 and for all $p \in \mathbb{N}$.  Therefore, $r_{\mu}(C_{n_1})\neq r_{\mu}(C_{n_2})$ if $n_1$ is an odd integer and $n_1,n_2\geq 7$.
    
     If $n$ is an even integer greater than 7, by Lemma \ref{ch2.lem1}, $ r_{\mu}(C_n) \neq  r_{\mu}(C_{n+2p})$ for all $p \in \mathbb{N}$. Also, by Lemma \ref{ch2.lem8}, $ r_{\mu}(C_n) \neq  r_{\mu}(C_{n+1})$. Now let $n=2k$ and consider, $$r_{\mu}(C_{n+3})-r_{\mu}(C_{n})= \dfrac{(2k+3)((2k+3)^2-1)}{24}-\dfrac{(2k-2)2k(2k+8)}{24}=\dfrac{1}{24}(12k^2 + 84k + 24)$$
 $12k^2 + 84k + 24 >0$ for all $k \in \mathbb{N}$. Therefore, $14<r_{\mu}(C_{n}) < r_{\mu}(C_{n+3})$ for all even integers $n$ greater than 7. From Lemma \ref{ch2.lem2}, $r_{\mu}(C_{n+3}) < r_{\mu}(C_{n+(2p+1)})$ for all $p=2,3,4 \ldots$. That is $r_{\mu}(C_{n}) \neq r_{\mu}(C_{n+(2p+1)})$ for all $p=1,2,3 \ldots$. Therefore,  $r_{\mu}(C_{n_1})\neq r_{\mu}(C_{n_2})$ if $n_1$ is an even integer and $n_1,n_2\geq 7$. By Lemma \ref{ch2.lem1}, the result follows.
\end{proof}
\begin{lemma}\label{ch2.lem3}
    Let $G(V, E)$ be a complete bipartite graph with bipartition $(A,B)$. If $X$ is a subset of $V$ which does not contain at least one vertex from each of the partitions $A$ and $B$, then $X$ is a mutual-visibility set of $G$.
\end{lemma}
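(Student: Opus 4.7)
The plan is to first unpack the hypothesis: $X$ fails to contain at least one vertex from each part means there exist $a \in A \setminus X$ and $b \in B \setminus X$. Then I would pick an arbitrary pair $u, v \in X$ and exhibit, in each case, a shortest $(u,v)$-path whose internal vertices avoid $X$; since $u, v$ were arbitrary, $X$ qualifies as a mutual-visibility set.

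I would split into three cases according to how $u$ and $v$ sit relative to the bipartition. The first case, when $u$ and $v$ lie in different parts, is immediate: $uv \in E(G)$ because $G$ is complete bipartite, so the single edge from $u$ to $v$ is a shortest path with no internal vertex, and the visibility condition holds vacuously. The second case is $u, v \in A$. Since $G$ is bipartite with an edge between every pair from different sides, one checks that $d_G(u,v) = 2$ and every shortest $(u,v)$-path has a unique internal vertex, which must lie in $B$. Using $b \in B \setminus X$ from the hypothesis, the path $(u, b, v)$ is a shortest $(u,v)$-path whose sole internal vertex lies outside $X$. The third case, $u, v \in B$, is symmetric: the path $(u, a, v)$ with $a \in A \setminus X$ witnesses $X$-visibility.

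Combining the three cases gives that every pair in $X$ is $X$-visible, which is the definition of a mutual-visibility set. There is no genuine obstacle here; the only point requiring care is to read the hypothesis correctly, namely that $X$ misses a vertex from \emph{both} $A$ and $B$ simultaneously (so that a non-$X$ internal vertex is available on whichever side a same-part pair forces us to route through), and to note that the diameter-two structure of $K_{m,n}$ makes the internal vertex on any shortest same-part path a single vertex on the opposite side.
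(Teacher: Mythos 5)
Your proof is correct and follows essentially the same route as the paper's: extract $a\in A\setminus X$ and $b\in B\setminus X$ from the hypothesis, then handle the cross-part pair via the direct edge and each same-part pair via a length-two path through the excluded vertex on the opposite side. No issues.
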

\begin{proof}
   By the assumption, there exist two vertices $a\in A$ and $b\in B$ such that $a, b \notin X$.  Let $u, v \in X$. Suppose that both $u$ and $v$ belong to the same partition. Consider the case $u, v \in A$. Since $G$ is a complete bipartite graph, $(u, b, v)$ is a shortest path from $u$ to $v$. Similarly, if $u, v \in B$, then $(u, a, v)$ is a shortest path from $u$ to $v$. In both cases $u$ and $v$ are $X$-visible.

   Next, we assume that $u$ and $v$ belong to different partite sets. Suppose $u \in A$ and $v \in B$. Then the shortest path from $u$ to $v$ is the edge $uv$. Therefore, $u$ and $v$ are $X$-visible. Hence, $X$ is a mutual-visibility set of $G$.
\end{proof}
\begin{lemma}\label{ch2.lem4}
 Let $G(V, E)$ be a complete bipartite graph with bipartition $(A,B)$, where $|A|=m$ and $|B|=n \geq 2$. Let $X=A \cup Y$ and $Y \subseteq B $. $X$ is a mutual-visibility set of $G$ if and only if $Y$ contains at most one element.
\end{lemma}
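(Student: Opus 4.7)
The plan is to prove the biconditional directly, handling each implication separately. For the forward direction I will establish the contrapositive: assume $|Y|\ge 2$ and exhibit two vertices of $X$ that are not $X$-visible. For the reverse direction I will verify $X$-visibility for every pair of vertices in $X$ by a short case analysis depending on which partite set the two vertices lie in. The hypothesis $n\ge 2$ will play an essential role only in the reverse direction, where it supplies a "witness" vertex in $B$ outside $X$.

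For the forward implication, suppose $|Y|\ge 2$ and pick any distinct $u,v\in Y\subseteq B$. Since $G$ is bipartite and $u,v$ lie in the same part, they are non-adjacent, so $d_G(u,v)=2$ and every shortest $(u,v)$-path has the form $(u,a,v)$ with $a\in A$. Because $A\subseteq X$, the internal vertex $a$ of every such path lies in $X$, so no shortest $(u,v)$-path avoids $X$. Hence $\{u,v\}$ is not $X$-visible, contradicting the assumption that $X$ is a mutual-visibility set. This forces $|Y|\le 1$.

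For the reverse implication, assume $|Y|\le 1$ and let $u,v\in X$ be distinct. If $u$ and $v$ lie in different parts, then the edge $uv$ is itself a shortest path with no internal vertices, so they are trivially $X$-visible. Otherwise both lie in the same part, and since $|Y|\le 1$ the only possibility is $u,v\in A$. Using $|B|=n\ge 2$ together with $|Y|\le 1$, the set $B\setminus Y$ is nonempty, so I may pick any $b\in B\setminus Y$; then $(u,b,v)$ is a shortest $(u,v)$-path whose sole internal vertex $b$ lies outside $X$, establishing $X$-visibility of $\{u,v\}$. Combining the two cases shows that $X$ is a mutual-visibility set.

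The main subtlety, and the reason Lemma~\ref{ch2.lem3} cannot simply be quoted here, is that $X$ contains the \emph{entire} part $A$; thus the earlier lemma (which required $X$ to miss at least one vertex from each part) does not apply, and the reverse direction must be verified directly. The only delicate point in that direct argument is ensuring that a vertex of $B$ outside $X$ is available to serve as the internal vertex of a shortest path between two elements of $A$, and this is guaranteed precisely by the bound $n\ge 2$ in the hypothesis.
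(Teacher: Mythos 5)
Your proposal is correct and follows essentially the same argument as the paper: the forward direction forces every shortest path between two vertices of $Y$ through $A\subseteq X$, and the reverse direction uses a vertex of $B\setminus Y$ (available since $n\geq 2$ and $|Y|\leq 1$) as the internal vertex of a shortest path between two vertices of $A$. The only difference is cosmetic — the paper splits the sufficiency into the cases $Y=\phi$ and $|Y|=1$, while you merge them into one uniform case analysis.
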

\begin{proof}
Assume that $X= A \cup Y$ is a mutual-visibility set of $G$, where $Y \subseteq B$. Suppose that $Y$ contains more than one vertex. Consider two vertices $b_1, b_2 \in Y$. Since $b_1$ and $b_2$ are not adjacent, any shortest path from $b_1$ to $b_2$ is of the form $(b_1, u, b_2)$, where $u \in A$. Therefore, the pair $\{b_1, b_2\}$ is not $X$-visible. This is a contradiction. Therefore, $Y$ contains at most one element. 

To prove the sufficiency of the result we consider the following cases.\\
Case 1: Let $Y= \phi$, then $X=A$. A shortest path between two vertices $u, v \in A$ is of the form $(u, b, v)$, where $b \in B$. So every pair $\{u, v\} \subseteq A$ is $X$-visible. Therefore, $X$ is a mutual-visibility set.\\   
Case 2: Let $Y$ contains only one vertex $b$. Choose an element $c$ from $B$ other than $b$. Then the shortest path between two vertices $u, v \in A$ is of the form $(u, c, v)$. So every pair $\{u, v\} \subseteq A$ is $X$-visible. All pairs of the form $\{u, b\}$, where $u \in A$ are $X$-visible, since $u$ and $b$ are adjacent. Therefore, $X$ is a mutual-visibility set.
\end{proof}

In \cite{Stefano}, G. D. Stefano identified  the size of the largest mutual visibility set of $K_{m,n}$ as follows:
\begin{theorem}[\cite{Stefano}]\label{ch2.th2}
    Let $G$ be a complete bipartite graph $K_{m,n}$ such that $m \geq 3$ and $n \geq 3$. Then $ \mu(G ) = m + n - 2$.
\end{theorem}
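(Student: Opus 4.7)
The plan is to sandwich $\mu(K_{m,n})$ between $m+n-2$ and $m+n-2$, using the two preceding lemmas as the main machinery. For the lower bound, I would fix any $a\in A$ and any $b\in B$ and set $X=(A\setminus\{a\})\cup(B\setminus\{b\})$, which has size $m+n-2$. Since $X$ misses the vertex $a$ of $A$ and the vertex $b$ of $B$, Lemma \ref{ch2.lem3} applies directly and declares $X$ a mutual-visibility set. This shows $\mu(K_{m,n})\ge m+n-2$.

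For the upper bound I would argue by contradiction: assume some mutual-visibility set $X$ has $|X|\ge m+n-1$, so $V\setminus X$ has at most one element. Split into two cases according to where the (at most one) missing vertex lies. If $X=V$, both $A\subseteq X$ and $B\subseteq X$; pick any two vertices $a_1,a_2\in A$ (possible since $m\ge 3\ge 2$), and observe that every shortest $(a_1,a_2)$-path in a bipartite graph of diameter $2$ has length $2$ with its internal vertex in $B$, hence in $X$, contradicting $X$-visibility. If $V\setminus X=\{v\}$ with, say, $v\in A$, then $B\subseteq X$ and $|A\cap X|=m-1\ge 2$, so the same argument applies to any two vertices in $A\cap X$; the case $v\in B$ is symmetric, using $n\ge 3$. (Alternatively one could invoke Lemma \ref{ch2.lem4} with $X=A\cup Y$, $|Y|\ge 2$, to conclude the same.) This gives $\mu(K_{m,n})\le m+n-2$, completing the proof.

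There is no real obstacle here: both lemmas do essentially all of the work. The only point that requires mild care is making sure the case $V\setminus X=\{v\}$ is handled for both possible partition memberships of $v$, which is where the hypotheses $m\ge 3$ and $n\ge 3$ are used to guarantee that the ``other'' side of the bipartition still contains at least two vertices of $X$ through which the blocked shortest paths must pass.
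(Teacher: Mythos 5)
Your argument is correct. Note, however, that the paper does not prove this statement at all: it is imported verbatim from \cite{Stefano} as a known result, so there is no in-paper proof to compare against. What you have written is a valid self-contained derivation. The lower bound via $X=(A\setminus\{a\})\cup(B\setminus\{b\})$ and Lemma \ref{ch2.lem3} is exactly right, and the upper bound is sound: any $X$ with $|X|\ge m+n-1$ misses at most one vertex, so at least one side of the bipartition is entirely contained in $X$ while the other side still meets $X$ in at least two vertices (this is where $m,n\ge 3$ enter), and those two vertices have all their shortest connecting paths blocked by the fully-included side. Your parenthetical alternative, invoking Lemma \ref{ch2.lem4} with $|Y|\ge 2$ (and its symmetric counterpart with the roles of $A$ and $B$ exchanged), dispatches all three cases uniformly and is arguably the cleaner route given that the paper already proves that lemma in both directions. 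Either way, the proof is complete and uses only machinery already available in the paper.
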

\begin{theorem}\label{ch2.th5}The visibility polynomial of the  complete bipartite graph $K_{m,n}(V, E)$ with bipartition $(A,B)$, where $|A|=m\geq 3 $ and $|B|=n \geq 3 , \ m\leq n$ is given by
  $\mathcal{V}(K_{m,n})=\sum_{i=0}^{m+n-2}r_ix^i$ where 
$$r_i=\left\{
  \begin{array}{ll}
  {m+n \choose i}& \text{if } 1\leq i \leq m+1 \\[8pt]
 {m+n \choose i}-{n \choose i-m}  & \text{if }  m+2\leq i \leq n+1 \\[8pt]
 {m+n \choose i}-{n \choose i-m}-{m \choose i-n}  & \text{if }  n+2\leq i \leq m+n-2
  \end{array}
\right.$$
\end{theorem}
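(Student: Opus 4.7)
The plan is to count, for each $i$, the subsets of $V$ of size $i$ that fail to be mutual-visibility sets, and then subtract this count from the total ${m+n\choose i}$. By Lemma \ref{ch2.lem3}, a subset $X$ is automatically a mutual-visibility set whenever $A\not\subseteq X$ and $B\not\subseteq X$, so the only candidates for non-visibility are the subsets that contain all of $A$ or all of $B$. By Lemma \ref{ch2.lem4}, a subset of the form $A\cup Y$ with $Y\subseteq B$ is a mutual-visibility set if and only if $|Y|\le 1$, and symmetrically for subsets of the form $B\cup Z$ with $Z\subseteq A$.

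With this characterization, I would partition the bad $i$-subsets into two families. A bad subset containing $A$ is determined by a set $Y\subseteq B$ with $|Y|=i-m\ge 2$; the number of these is ${n\choose i-m}$, and they exist only when $i\ge m+2$. Symmetrically, a bad subset containing $B$ is determined by a set $Z\subseteq A$ with $|Z|=i-n\ge 2$, yielding ${m\choose i-n}$ subsets, existing only when $i\ge n+2$. A subset containing both $A$ and $B$ entirely must equal $V$ and hence has size $m+n$; since we restrict to $i\le m+n-2$ by Theorem \ref{ch2.th2} (which gives $\mu(K_{m,n})=m+n-2$), the two bad families are disjoint.

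Finally, I would assemble the three cases according to which families contribute. For $1\le i\le m+1$ neither family contributes, giving $r_i={m+n\choose i}$; for $m+2\le i\le n+1$ only the first family contributes, giving $r_i={m+n\choose i}-{n\choose i-m}$; for $n+2\le i\le m+n-2$ both contribute disjointly, giving the third case. The only bookkeeping care is to check that the case ranges partition $\{1,\ldots,m+n-2\}$ correctly in each sub-case $n=m$, $n=m+1$, and $n\ge m+2$ (some of the subranges may be empty, but the formula still gives the correct values), and that the two bad families truly do not overlap in the third range. Both of these reduce to elementary inequalities, so no step constitutes a real obstacle; the substance of the proof is entirely carried by Lemmas \ref{ch2.lem3} and \ref{ch2.lem4}.
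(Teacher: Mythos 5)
Your proposal is correct and rests on exactly the same two lemmas (Lemmas \ref{ch2.lem3} and \ref{ch2.lem4}) and the same structural decomposition as the paper's proof; the only difference is presentational, in that you characterize the non-mutual-visibility sets globally (those containing all of $A$ with at least two vertices of $B$, or symmetrically) and subtract, whereas the paper runs the case analysis separately within each range of $i$. The disjointness observation and the check that the subranges may be empty when $n\in\{m,m+1\}$ are handled correctly, so there is no gap.
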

\begin{proof}
Let $X\subseteq V$ be any subset of cardinality $i$. We consider various cases based on the value of $i$, and for each case, we count the number of instances in which $X$ forms a mutual-visibility set of cardinality $i$.\\
Case 1: Let $1\leq i\leq m+1$. In this case, $X$ falls under one of the conditions mentioned below:\\
    1. There exists at least one vertex $u\in A$ and at least one vertex $v\in B$ such that $u,v\notin X$.\\
    2. $X=A \cup Y$, where $Y \subseteq B$ contains at most one vertex.\\
    3. $X=Y'\cup B$, where $Y' \subseteq A$ contains at most one vertex.\\
   By Lemma \ref{ch2.lem3} and Lemma \ref{ch2.lem4}, it follows that $X$ is a mutual-visibility set of $G$. Hence, in this case, $K_{m,n}$ has ${m+n \choose i}$ mutual-visibility sets of order $i$.\\ Case 2: Let  $m+2\leq i \leq n+1$ (Only when $m<n$).  If $X \cap A \subsetneq A$, then $X \cap B$  is either $B$ or a proper subset of $B$. In the former case, $ X \cap A$ contains at most one element and hence by Lemma \ref{ch2.lem4}, $X$ is a mutual-visibility set of $G$. In the latter case, $X$ does not contain at least one of the vertices from each of the partitions $A$ and $B$, and it follows from Lemma \ref{ch2.lem3} that $X$ is a mutual-visibility set of $G$.

On the other hand, if $X \cap A = A$, then $X \cap B$ contains at least two vertices. Therefore, by Lemma \ref{ch2.lem4}, $X$ is not a mutual-visibility set of $G$. Since there are ${n \choose i-m}$ ways to choose $i-m$ elements from $B$, it follows that the number of mutual-visibility sets of $K_{m,n}$ having cardinality $i$ in this case is equal to ${m+n \choose i}-{n \choose i-m}$.\\
Case 3: Let $n+2\leq i \leq m+n-2$. 
If $X \cap A \subsetneq A$, then $X \cap B$  is either $B$ or a proper subset of $B$. In the former case, $ X \cap A$ contains more than one element, and hence, by Lemma \ref{ch2.lem4}, $X$ is not a mutual-visibility set of $G$. The number of such sets is ${m \choose i-n}$. In the latter case, $X$ does not contain at least one vertex from each of the partitions $A$ and $B$, and it follows from Lemma \ref{ch2.lem3} that  $X$ is a mutual-visibility set of $G$.

If $X \cap A=A$, then as proved in Case 2, $X$ is not a mutual-visibility set of $G$. The number of such sets is ${n \choose i-m}$. Therefore, the number of mutual-visibility sets of cardinality $i$ in this case is equal to ${m+n \choose i}-{n \choose i-m}-{m \choose i-n}$.\\
Case 4: If $i > m+n-2$, then by Theorem \ref{ch2.th2}, there is no mutual-visibility set of $G$ having cardinality $i$. This completes the proof.
\end{proof}
  \begin{coro}\label{theorem15}The visibility polynomial of the  complete bipartite graph $K_{n,n}$ is given by
  $\mathcal{V}(K_{n,n})=\displaystyle\sum_{i=0}^{2n-2}r_ix^i$ where 
$$r_i=\left\{
  \begin{array}{ll}
  {2n \choose i}& \text{if } 0\leq i \leq n+1 \\[8pt]
 {2n \choose i}-2{n \choose i-n}  & \text{if } n+2\leq i \leq 2n-2
  \end{array}
\right.$$
\end{coro}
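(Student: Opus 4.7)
The plan is to derive this corollary as a direct specialization of Theorem~\ref{ch2.th5} to the case $m=n$, together with the observation that the constant term $r_0 = 1$ (the empty set is always a mutual-visibility set) agrees with $\binom{2n}{0}$ and can therefore be absorbed into the first branch of the formula.

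First, I would apply Theorem~\ref{ch2.th5} with $m$ replaced by $n$. The theorem partitions the index $i$ into three ranges:
\begin{itemize}
    \item[(a)] $1 \leq i \leq m+1$, giving $r_i = \binom{m+n}{i}$;
    \item[(b)] $m+2 \leq i \leq n+1$, giving $r_i = \binom{m+n}{i} - \binom{n}{i-m}$;
    \item[(c)] $n+2 \leq i \leq m+n-2$, giving $r_i = \binom{m+n}{i} - \binom{n}{i-m} - \binom{m}{i-n}$.
\end{itemize}
The key observation is that when $m=n$, range (b) becomes $n+2 \leq i \leq n+1$, which is empty. Thus only ranges (a) and (c) survive, and these become respectively $1 \leq i \leq n+1$ and $n+2 \leq i \leq 2n-2$, matching the intervals stated in the corollary.

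Next I would simplify the coefficient in range (c). With $m=n$, both correction terms $\binom{n}{i-m}$ and $\binom{m}{i-n}$ reduce to $\binom{n}{i-n}$, so they combine to give $r_i = \binom{2n}{i} - 2\binom{n}{i-n}$, exactly as claimed. Finally, I would note that $r_0 = 1 = \binom{2n}{0}$, so extending range (a) down to $i=0$ introduces no inconsistency, justifying the stated lower bound $0 \leq i \leq n+1$. The upper cutoff at $i = 2n-2$ is justified by Theorem~\ref{ch2.th2}, which gives $\mu(K_{n,n}) = 2n-2$, so $r_i = 0$ for $i > 2n-2$.

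I do not anticipate any genuine obstacle: the proof is essentially a bookkeeping exercise. The only point requiring a moment of care is verifying that range (b) of Theorem~\ref{ch2.th5} truly collapses when $m=n$, and that the symmetry $\binom{n}{i-m} = \binom{m}{i-n}$ is exactly what produces the factor of $2$ in the second branch of the corollary's formula.
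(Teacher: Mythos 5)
Your proposal is correct and follows essentially the same route as the paper: the paper likewise deduces the corollary from Cases 1 and 3 of Theorem~\ref{ch2.th5}, noting that Case 2 is vacuous when $m=n$. Your additional remarks on $r_0=\binom{2n}{0}$ and on the two correction terms coinciding to give the factor $2$ are exactly the bookkeeping the paper leaves implicit.
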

\begin{proof}
    The result follows from case 1 and 3 of Theorem \ref{ch2.th5}. Note that  case 2 of the Theorem is trivially ruled out.
\end{proof}
 \section{Visibility polynomial of the join of two graphs}
\begin{theorem}\label{ch2.th3}
    If $G$ and $H$ are two complete disjoint graphs, then $\mathcal{V}(G\vee H)=\mathcal{V}(G).\mathcal{V}(H)$. 
\end{theorem}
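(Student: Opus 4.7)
The plan is to observe that the hypothesis forces $G \vee H$ to be a complete graph, and then the identity collapses to the binomial identity $(1+x)^{m+n} = (1+x)^m (1+x)^n$. Concretely, write $G = K_m$ and $H = K_n$ on disjoint vertex sets. In $G \vee H$, any two vertices of $V(G)$ are adjacent (since $G$ is complete), any two vertices of $V(H)$ are adjacent (since $H$ is complete), and any vertex of $V(G)$ is adjacent to any vertex of $V(H)$ by the definition of the join. Hence every pair of distinct vertices of $G \vee H$ is joined by an edge, i.e.\ $G \vee H \cong K_{m+n}$.

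Given this structural observation, I would simply apply Theorem \ref{theorem12} three times. On the left-hand side,
\[
\mathcal{V}(G \vee H) \;=\; \mathcal{V}(K_{m+n}) \;=\; (1+x)^{m+n}.
\]
On the right-hand side,
\[
\mathcal{V}(G)\,\mathcal{V}(H) \;=\; \mathcal{V}(K_m)\,\mathcal{V}(K_n) \;=\; (1+x)^m\,(1+x)^n \;=\; (1+x)^{m+n}.
\]
Comparing the two expressions gives $\mathcal{V}(G \vee H) = \mathcal{V}(G)\,\mathcal{V}(H)$, completing the proof.

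There is no real obstacle here; the argument is essentially a one-line identification of $G \vee H$ with $K_{m+n}$ followed by an appeal to Theorem \ref{theorem12}. It is worth noting that this multiplicative behaviour is a very special feature of the complete case: for general $G$ and $H$, the join introduces length-$2$ shortest paths between non-adjacent pairs that pass through the other side, and these interactions will in general destroy the clean factorisation, so one should not expect $\mathcal{V}(G \vee H) = \mathcal{V}(G)\mathcal{V}(H)$ beyond the complete setting.
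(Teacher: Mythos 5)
Your argument is correct and is essentially identical to the paper's proof: both identify $K_m \vee K_n \cong K_{m+n}$ and then reduce the claim to $(1+x)^{m+n} = (1+x)^m(1+x)^n$ via Theorem \ref{theorem12}. Your extra justification that the join of two complete graphs is complete, and your closing remark on why the factorisation fails in general, are sensible additions but do not change the substance.
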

\begin{proof}
   Let $G$ and $H$ be the complete graphs $K_m$ and $K_n$, respectively. Since $K_m\vee K_n \cong K_{m+n}$, from Theorem \ref{theorem12}, it follows that 
      $$ \mathcal{V}(K_m\vee K_n)=\mathcal{V}(K_{m+n})
       =(1+x)^{m+n}
       =(1+x)^m.(1+x)^n
       =\mathcal{V}(K_m).\mathcal{V}(K_n)$$
   Hence the proof.
\end{proof}

In \cite{Stefano}, G.D. Stefano characterized graphs with $\mu(G)=|V|$ as follows:
\begin{lemma}[\cite{Stefano}]\label{lemma_23}
Let $G=(V,E)$ be a graph such that $|V|=n$. Then $\mu(G)=|V|$ if and only if $G\cong K_n$.
\end{lemma}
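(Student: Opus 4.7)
The plan is to verify both directions directly from the definition of a mutual-visibility set, without any auxiliary machinery beyond results already stated in the excerpt.

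For the sufficiency ($G\cong K_n \Rightarrow \mu(G)=n$), I would invoke Theorem \ref{theorem12}, which gives $\mathcal{V}(K_n)=(1+x)^n$. Since the coefficient of $x^n$ in this polynomial is $1$, the full vertex set $V(K_n)$ is itself a mutual-visibility set, and hence $\mu(K_n)=n$.

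For the necessity ($\mu(G)=n \Rightarrow G\cong K_n$), I would assume that $V$ itself is a mutual-visibility set of $G$ and then pick any two distinct vertices $u,v\in V$. By the very definition of $V$-visibility, there must exist a shortest $(u,v)$-path $P$ with $V(P)\cap V \subseteq \{u,v\}$. Because every vertex of $G$ lies in $V$, this containment forces $V(P)=\{u,v\}$, i.e.\ $P$ has no internal vertices; hence $P$ is the single edge $uv$, so $u$ and $v$ are adjacent with $d_G(u,v)=1$. Since the pair $\{u,v\}$ was arbitrary, every two distinct vertices of $G$ are joined by an edge, and therefore $G\cong K_n$.

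The main ``obstacle'' is, honestly, negligible: once one observes that taking the forbidden set $X$ equal to $V$ collapses the $V$-visibility condition to ordinary adjacency, both implications follow in one line each. The only point that merits a brief comment is that connectedness of $G$ (assumed throughout the paper) guarantees the relevant shortest path actually exists, so the argument above is well-posed.
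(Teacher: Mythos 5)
Your proof is correct. The paper itself states this lemma as a quoted result from Di Stefano's work \cite{Stefano} and gives no proof of its own, so there is nothing internal to compare against; your argument is the standard one. Both directions are sound: for necessity, setting the blocking set $X=V$ does indeed force $V(P)\cap V=V(P)\subseteq\{u,v\}$ for some shortest $(u,v)$-path $P$, which collapses $P$ to the single edge $uv$; and for sufficiency, appealing to Theorem \ref{theorem12} (or simply to the observation that adjacent vertices are always $X$-visible) shows the whole vertex set of $K_n$ is a mutual-visibility set.
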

\begin{coro}\label{ch2.coro1}
 Let $G$ and $H$ be disjoint graphs. $V(G \cup H)$ is a mutual-visibility set in $G \vee H$ if and only if $G$ and $H$ are complete. 
\end{coro}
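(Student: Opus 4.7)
The plan is to reduce the corollary directly to Lemma \ref{lemma_23}. First I observe that $V(G\cup H)=V(G\vee H)$, so the claim asserts that the entire vertex set of $G\vee H$ forms a mutual-visibility set exactly when both $G$ and $H$ are complete. The key preliminary remark is that when $X=V(G\vee H)$, the visibility condition $V(P)\cap X\subseteq\{u,v\}$ forces any qualifying shortest $(u,v)$-path to have no internal vertices, i.e., it forces $u$ and $v$ to be adjacent in $G\vee H$. This shows that $V(G\vee H)$ is a mutual-visibility set if and only if $G\vee H$ is complete, which is essentially the content of Lemma \ref{lemma_23}.

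For the necessity direction, I would argue that if $V(G\vee H)$ is a mutual-visibility set, then $\mu(G\vee H)=|V(G\vee H)|$, and Lemma \ref{lemma_23} forces $G\vee H\cong K_{m+n}$ with $m=|V(G)|$ and $n=|V(H)|$. By the definition of the join, the subgraph of $G\vee H$ induced by $V(G)$ is exactly $G$; hence every non-edge of $G$ would remain a non-edge of $G\vee H$. Since $G\vee H$ has no non-edges, $G$ must be complete, and by symmetry so must $H$.

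For sufficiency, if both $G$ and $H$ are complete, then (as already used in the proof of Theorem \ref{ch2.th3}) $G\vee H\cong K_{m+n}$. In a complete graph, any two distinct vertices are joined by an edge, which is itself a shortest path with no internal vertices, so every subset of the vertex set is trivially a mutual-visibility set; in particular, $V(G\vee H)$ is one.

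I do not anticipate a real obstacle here: the statement is essentially a repackaging of Lemma \ref{lemma_23} together with the elementary structural fact that $G\vee H$ is complete if and only if both $G$ and $H$ are complete. The only point deserving careful articulation is the reduction of the $X=V(G\vee H)$ visibility condition to pairwise adjacency, since this is precisely what lets us invoke the known characterization of graphs attaining $\mu(G)=|V|$.
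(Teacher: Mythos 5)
Your proposal is correct and follows essentially the same route as the paper: necessity via Lemma \ref{lemma_23} (the whole vertex set being a mutual-visibility set forces $G\vee H$ to be complete, hence $G$ and $H$ are complete), and sufficiency via $K_m\vee K_n\cong K_{m+n}$ as in Theorem \ref{ch2.th3}. Your added remark that taking $X=V(G\vee H)$ collapses the visibility condition to pairwise adjacency is a nice explicit justification of why Lemma \ref{lemma_23} applies, but it does not change the argument.
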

\begin{proof}
    If $V(G \cup H)$ is a mutual-visibility set in $G \vee H$, by Lemma \ref{lemma_23}, $G \vee H$ is complete. Therefore, $G$ and $H$ are complete.  The converse follows from Theorem \ref{ch2.th3}.   
\end{proof}
\begin{lemma}\label{ch2.lem6}
    Let $G$ and $H$ be disjoint graphs. If $X=A \cup B$, where $A \subsetneq V(G)$ and $B \subsetneq V(H)$, then $X$ is a mutual-visibility set of the join $G \vee H$.
\end{lemma}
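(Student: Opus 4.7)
The plan is to exploit the properness of $A$ and $B$ together with the structure of the join. Since $A \subsetneq V(G)$ and $B \subsetneq V(H)$, I can fix witnesses $g^\ast \in V(G)\setminus A$ and $h^\ast \in V(H)\setminus B$. Note that $g^\ast, h^\ast \notin X$, and these will serve as ``escape vertices'' to realize shortest paths whose internal vertices avoid $X$.

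Next, I would take an arbitrary pair $u,v \in X$ and split into three cases according to where they sit in the bipartition $V(G)\cup V(H)$.

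\textbf{Case 1:} $u \in A$, $v \in B$ (or symmetrically). By the definition of the join, $uv \in E(G\vee H)$, so the edge $uv$ itself is a shortest path with no internal vertices and the pair is trivially $X$-visible.

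\textbf{Case 2:} $u, v \in A$. If $uv \in E(G)$, then again the edge is a shortest path with empty interior. Otherwise $u$ and $v$ are non-adjacent in $G$ (and hence in $G\vee H$, since the join does not add edges inside $V(G)$), so $d_{G\vee H}(u,v) = 2$. The path $(u, h^\ast, v)$ has length $2$ and uses the join edges $uh^\ast$ and $h^\ast v$; since $h^\ast \notin X$, this shortest path has internal vertex outside $X$, certifying $X$-visibility.

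\textbf{Case 3:} $u, v \in B$. This is symmetric to Case 2, using $g^\ast$ as the intermediate vertex.

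Since every pair in $X$ is $X$-visible, $X$ is a mutual-visibility set of $G\vee H$. The only subtlety to guard against is recognizing that the join leaves the internal adjacencies of $G$ and $H$ unchanged, so a non-adjacent pair in $G$ remains non-adjacent in $G\vee H$, ensuring that the length-$2$ path through $h^\ast$ (resp.\ $g^\ast$) is genuinely shortest; this is the only place where verification is needed, and it is immediate from the definition of the join. The properness hypotheses $A \subsetneq V(G)$ and $B \subsetneq V(H)$ are used precisely to guarantee the existence of the escape vertices $g^\ast$ and $h^\ast$.
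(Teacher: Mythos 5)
Your proof is correct and follows essentially the same route as the paper's: handle cross pairs by adjacency in the join, and for two vertices in the same part use a length-two shortest path through a vertex of the other part that lies outside $X$, whose existence is guaranteed by the properness of $A$ and $B$. Your explicit remark that non-adjacency is preserved under the join (so the length-$2$ path is genuinely shortest) is a small extra verification the paper leaves implicit.
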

\begin{proof}
    Let $u, v \in X$. If $u$ and $v$ belong to different sets $A$ and $B$, then they are adjacent. On the other hand, assume that $u, v \in A$. If they are adjacent, they are $X$-visible. Otherwise, there exists a $(u,v)$-path $(u,h,v)$, where $h \in V(H) \setminus B$. Similarly, if $u, v \in B$,  there exists a $(u,v)$-path $(u,g,v)$, where $g \in V(G) \setminus A$. Therefore, $X$ is the mutual-visibility set of the join $G \vee H$.
\end{proof}
\begin{lemma}\label{ch2.lem5}
    If $G$ and $H$ are disjoint graphs, then $V(G)$ and $V(H)$ are mutual-visibility sets of the join $G \vee H$.
\end{lemma}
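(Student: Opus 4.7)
The plan is to handle each of $V(G)$ and $V(H)$ separately, though by the symmetric role of the two factors in the join it suffices to treat $V(G)$; the argument for $V(H)$ is identical after swapping. So the task reduces to verifying that every pair $u,v \in V(G)$ admits a shortest path in $G \vee H$ whose internal vertices avoid $V(G)$.

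I would split into two cases according to adjacency in $G$. If $uv \in E(G)$, then $uv \in E(G \vee H)$ as well, so the edge itself is a shortest $(u,v)$-path with no internal vertices; the visibility condition holds vacuously. If $uv \notin E(G)$, then since $u,v$ lie in the same part of the join and $G,H$ are disjoint, the only edges incident to $u$ in $G \vee H$ that could reach $v$ go either through $G$ (but $uv \notin E(G)$) or through $H$, giving $d_{G \vee H}(u,v) \geq 2$. On the other hand, picking any $h \in V(H)$ (here we use that $V(H) \neq \emptyset$, which is implicit in calling $H$ a graph in the join construction), the definition of the join guarantees $uh, hv \in E(G \vee H)$, so $(u,h,v)$ is a shortest $(u,v)$-path. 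Its unique internal vertex $h$ lies in $V(H)$, hence outside $V(G)$, confirming that $u$ and $v$ are $V(G)$-visible.

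Combining the two cases shows $V(G)$ is a mutual-visibility set, and the same argument with $G$ and $H$ interchanged handles $V(H)$. I do not foresee a genuine obstacle; the only subtle point is the mild non-emptiness hypothesis on the other factor, which is the standing convention for the join operation as used throughout the paper (consistent with Lemma~\ref{ch2.lem6}, which also tacitly relies on $V(H) \setminus B$ being non-empty for the intermediate vertex).
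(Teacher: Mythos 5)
Your argument is essentially identical to the paper's proof: split on whether $u,v \in V(G)$ are adjacent, and in the non-adjacent case use any $h \in V(H)$ to get a shortest path $(u,h,v)$ whose internal vertex avoids $V(G)$, with symmetry handling $V(H)$. The proposal is correct and adds only the (reasonable) explicit remark about non-emptiness of the other factor.
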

\begin{proof}
   Let $u, v \in V(G)$. If they are adjacent, they are $G$-visible. Otherwise, $d(u, v)=2$ in $G \vee H$. There exists at least one shortest $(u, v)$-path $(u,h,v)$, where $h \in V(H) $. It follows that any two vertices $u$ and $v$ of $G$ are $G$-visible. Therefore, $V(G)$ is a mutual-visibility set of the join $G \vee H$. Similar proof holds for $V(H)$ also.
\end{proof}
\begin{lemma}\label{ch2.lem7}
    Let $G$ and $H$ be two non-complete disjoint graphs with $|V(H)|\geq 2$. Let $B \subseteq V(H)$.  Then $X=V(G) \cup B$ is a mutual-visibility set of the join $G \vee H$ if and only if $B$ satisfies one of the following conditions. \\
    1. $B=\phi$.\\
2. Induced subgraph $H[B]$ is a clique.\\
        3. $B$ is a mutual-visibility set in $H$ and $diam_H(B) = 2$
\end{lemma}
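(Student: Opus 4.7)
The plan is to exploit the fact that $\mathrm{diam}(G \vee H)=2$, so every shortest path in the join has length $1$ or $2$, which reduces $X$-visibility for a pair $u,v\in X$ with $uv\notin E(G\vee H)$ to the existence of a common neighbor of $u,v$ lying outside $X$. I would then split pairs from $X=V(G)\cup B$ into three types and analyse each separately: (a) $u\in V(G)$, $v\in B$, which are automatically $X$-visible because they are adjacent in the join; (b) $u,v\in V(G)$; and (c) $u,v\in B$.

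For type (b), since $G$ is non-complete there exists at least one non-adjacent pair $u,v\in V(G)$; the common neighbors of $u,v$ in $G\vee H$ are $(N_G(u)\cap N_G(v))\cup V(H)$, and those inside $V(G)$ lie in $X$, so $X$-visibility forces some $h\in V(H)\setminus B$, i.e.\ $B\subsetneq V(H)$. Conversely, when $B\subsetneq V(H)$ any such $h\in V(H)\setminus B$ gives a shortest path $(u,h,v)$ whose internal vertex avoids $X$, so every pair in $V(G)$ is automatically $X$-visible. For type (c), an adjacent pair in $B$ is trivially $X$-visible, while a non-adjacent pair $u,v\in B$ has $V(G)\subseteq X$ excluded from any internal vertex, so $X$-visibility is equivalent to the existence of some $w\in N_H(u)\cap N_H(v)$ with $w\notin B$, i.e.\ a shortest $H$-path $(u,w,v)$ that witnesses $B$-visibility in $H$ and also shows $d_H(u,v)=2$.

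I would then assemble the equivalence by cases on the structure of $B$. If $B=\varnothing$, condition (1) holds and $X=V(G)$ is a mutual-visibility set by Lemma \ref{ch2.lem5}. If $B\ne\varnothing$ and $H[B]$ is a clique, every pair in $B$ is adjacent so (c) is vacuous; moreover $B\subsetneq V(H)$ holds automatically because $H$ is non-complete, so (b) is fine — this matches condition (2). In the remaining case $B\ne\varnothing$ with $H[B]$ not a clique, I use the analysis of (c): the combined requirement "every non-adjacent pair $u,v\in B$ has a common $H$-neighbor outside $B$" is equivalent to saying $d_H(u,v)=2$ for every such pair (so $\mathrm{diam}_H(B)=2$, since non-cliqueness forces some pair at $H$-distance exactly $2$) together with the existence of a shortest $(u,v)$-path in $H$ internally avoiding $B$ (so $B$ is a mutual-visibility set of $H$); Lemma \ref{lemma_23} applied inside $H$ (non-complete) then rules out $B=V(H)$, securing $B\subsetneq V(H)$ for free. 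This is condition (3), and each implication is reversible.

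The main obstacle I anticipate is precisely this last equivalence: turning the local condition "non-adjacent pairs in $B$ have common $H$-neighbors outside $B$" into the two global hypotheses "$B$ is a mutual-visibility set of $H$" and "$\mathrm{diam}_H(B)=2$". The subtle point is that the visibility condition inside $G\vee H$ only constrains pairs at join-distance $2$, whereas mutual-visibility in $H$ requires witnessing shortest $H$-paths, so one must rule out the possibility that $d_H(u,v)\ge 3$ (or that $u,v$ are in different components of $H$). Both are blocked by the requirement that the common neighbor lies in $H$, which forces $d_H(u,v)\le 2$; combined with non-cliqueness of $H[B]$, this pins the diameter to exactly $2$ and makes the length-$2$ $H$-paths automatically shortest, completing the equivalence.
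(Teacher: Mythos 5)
Your proposal is correct and follows essentially the same route as the paper: both arguments rest on $\mathrm{diam}(G\vee H)=2$, which reduces $X$-visibility of a non-adjacent pair to the existence of a common neighbour outside $X$, and then split pairs into the types $V(G)$--$V(G)$, $V(G)$--$B$, and $B$--$B$ before assembling the three conditions. Your version is in fact slightly more complete, since in the sufficiency of condition (3) you explicitly verify the pairs inside $B$ (via the shortest $H$-path with internal vertex in $V(H)\setminus B$), a step the paper's proof passes over.
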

\begin{proof}
Let $B\subseteq V(H) $ such that $X=V(G)\cup B$ is a mutual-visibility set of $G\vee H$. Since $V(G)$ is a mutual-visibility set of $G\vee H$ (By Lemma \ref{ch2.lem5}), $B$ may be empty. Now consider the case where $B\neq \phi$. We first claim that $diam_H(B)\leq 2$.

If $diam_H(B)>2$, then there exists $b_1, b_2 \in B$ such that $d_H(b_1, b_2) >2$. But $d_{G \vee H}(b_1, b_2) =2$, and all the shortest $(b_1, b_2)$-paths contain an element from $V(G)\subset X$. So $b_1, b_2$ are not $X$-visible. Therefore, $X$ is not a mutual-visibility set in $G \vee H$.

Now, if $diam_H(B)=0$, then $B$ must be a singleton set, which is a 1-clique in $H$. If $diam_H(B)=1$, then there are edges between every pair of vertices of $B$ and hence it is a clique. If $diam_H(B)=2$, then either $u, v \in B$ is adjacent or $d_H(u, v)=2$. In the former case, $u, v$ are $B$-visible in $H$, while in the latter case it is given that $u,v$ are $X$-visible in the join $G \vee H$. Therefore, there exists a $(u, v)$-path $P=(u, a, v)$ of length 2 in  $G \vee H$, such that $P \cap X = \{ u, v\}$. This implies $a \in V(H) \setminus B$. Since $a \notin B$,  $P \cap B = \{ u, v\}$. Therefore, $u$ and $v$ are $B$-visible in $H$. Therefore, $B$ is a mutual-visibility set in $H$.

To prove the sufficient part, assume that $B$ satisfies one of the conditions stated in the lemma and consider each of the cases separately.\\
1.  If $B=\phi$, then $X=V(G)$ and it is a mutual-visibility set of the join $G \vee H$ by Lemma \ref{ch2.lem5}.\\
2.  Assume that $H[B]$ is a clique. If $u, v \in B$ or $u \in G $ and $v \in B $ then they are adjacent in $G\vee H$. Therefore, $u, v$ are $X$-visible in $G\vee H$. If $u, v \in V(G)$ and are adjacent, they are $X$-visible. If not, then $d(u, v)=2$ in $G \vee H$. Since $H$ is not complete, $B \subsetneq V(H)$ and there exists at least one $h \in V(H)\setminus B$ and  $(u,h,v)$ is a shortest $(u, v)$-path. Therefore, $X$ is a mutual-visibility set in $G \vee H$.\\
3. Assume that $B$ is a mutual-visibility set in $H$  and $diam_H(B)=2$. Then there exists a shortest path $(b_1, h, b_2)$, where $b_1, b_2 \in B$ and $h \in V(H) \setminus B$. If $u, v \in V(G)$ are adjacent, they are $X$-visible in $G\vee H$. If not, then $d(u, v)=2$ in $G \vee H$ and the shortest $(u, v)$-path, $P=(u, h, v)$ satisfies $P \cap X=\{u, v\}$. Therefore, $u, v$ are $X$-visible.  Finally, if $u \in V(G) $ and $v \in B$ then they are adjacent in $G\vee H$ and therefore $X$-visible.  Therefore, $X$ is a mutual-visibility set in $G \vee H$.
\end{proof}
\begin{theorem}\label{ch2.th6}
     Let $G$ and $H$ be two disjoint non-complete graphs with $2 \leq m \leq n$, $m=|V(G)|$ and $n=|V(H)|$. Then, the visibility polynomial of the join of $G$ and $H$ is given by, 
     $\mathcal{V}(G \vee H)=\displaystyle \sum_{i=0}^{m+n-1}r_i x^i$
     where
     $$r_i=\left\{
  \begin{array}{ll}
  {m+n \choose i}&  \text{if }  0 \leq i \leq m \\[5pt]
 \displaystyle \sum_{k=0}^{m-1}{m \choose k}{n \choose i-k}+c_{i-m}(H) + \Theta_{i-m,2}(H)& \text{if }  m+1 \leq i \leq n \\[10pt]
 \displaystyle\sum_{k=i-n+1}^{m-1}{m \choose k}{n \choose i-k}+c_{i-m}(H) +\Theta_{i-m,2}(H)+c_{i-n}(G)+\Theta_{i-n,2}(G)  & \text{if } n+1\leq i \leq m+n-2 \\[12pt]
 c_{n-1}(H) +\Theta_{n-1,2}(H)+c_{m-1}(G)+\Theta_{m-1,2}(G)  & \text{if } i= m+n-1
  \end{array}
\right.$$
\end{theorem}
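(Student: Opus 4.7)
The plan is to decompose every candidate subset $X \subseteq V(G \vee H)$ as $X = A \cup B$ with $A = X \cap V(G)$ and $B = X \cap V(H)$, and to split into four mutually exclusive configurations according to whether each of $A$, $B$ is a proper subset of, or equals, the full vertex set of its factor. The preceding lemmas completely pin down which $X$ are mutual-visibility sets in each configuration, and summing the counts for each value of $i$ will yield the piecewise formula.

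First, when $A \subsetneq V(G)$ and $B \subsetneq V(H)$, Lemma \ref{ch2.lem6} makes every such $X$ automatically a mutual-visibility set, so this configuration contributes $\sum_k \binom{m}{k}\binom{n}{i-k}$ with $k = |A|$ ranging over $\max(0, i-n+1) \leq k \leq \min(m-1, i)$. Second, when $A = V(G)$ and $B \subsetneq V(H)$, Lemma \ref{ch2.lem7} characterizes the admissible $B$ of size $i-m$ as either $\emptyset$, a clique of $H$, or a mutual-visibility set of $H$ of diameter exactly $2$; these three alternatives are pairwise disjoint (cliques have diameter at most $1$), so for $i - m \geq 1$ this contributes $c_{i-m}(H) + \Theta_{i-m,2}(H)$, and for $i = m$ it contributes the single set $V(G)$. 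Third, by the symmetric version of Lemma \ref{ch2.lem7} (swap $G$ and $H$), the configuration $A \subsetneq V(G)$, $B = V(H)$ contributes $c_{i-n}(G) + \Theta_{i-n,2}(G)$ when $i - n \geq 1$ and the single set $V(H)$ when $i = n$. Fourth, Corollary \ref{ch2.coro1} rules out $A = V(G)$, $B = V(H)$ entirely, since by hypothesis neither factor is complete.

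It remains to add the four contributions for each $i$ and check that they collapse to the stated expression. For $0 \leq i \leq m$, the proper-proper sum together with the Lemma \ref{ch2.lem5} singletons contributed by any active half-full configuration (namely $V(G)$ when $i = m$, and additionally $V(H)$ in the subcase $i = m = n$) rebuilds the unrestricted Vandermonde sum $\sum_{k=0}^{i}\binom{m}{k}\binom{n}{i-k} = \binom{m+n}{i}$. For $m+1 \leq i \leq n$, the $V(H)$-full contribution is active only at $i = n$, where it merges with the $k = 0$, $i-k = n$ term via $\binom{n}{n} = 1$, producing $\sum_{k=0}^{m-1}\binom{m}{k}\binom{n}{i-k} + c_{i-m}(H) + \Theta_{i-m,2}(H)$. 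The analogous absorption at both endpoints handles $n+1 \leq i \leq m+n-2$, in which range both half-full contributions are non-trivial. Finally, for $i = m+n-1$ the proper-proper sum is empty because $|A| + |B| \leq m+n-2$, leaving exactly the two symmetric half-full terms.

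The main technical delicacy is the bookkeeping at the boundary values $i \in \{m, n, m+n-1\}$: at each of them one must verify that the Lemma \ref{ch2.lem5} singletons coming from the half-full configurations exactly fill in, or (at $i = m+n-1$) vanish against, the omitted endpoints of the proper-proper sum, so that the absorbed-sum form displayed in the theorem is recovered cleanly.
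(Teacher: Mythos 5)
Your proposal is correct and follows essentially the same route as the paper: decompose $X = A \cup B$ with $A = X \cap V(G)$, $B = X \cap V(H)$, apply Lemma \ref{ch2.lem6} to the proper--proper configuration, Lemma \ref{ch2.lem7} (and its symmetric version) to the half-full configurations, and Corollary \ref{ch2.coro1} to exclude the full--full case. Your explicit verification of the boundary absorptions at $i \in \{m, n, m+n-1\}$ (e.g.\ the $k=0$, $B=V(H)$ term at $i=n$ being justified by Lemma \ref{ch2.lem5} rather than Lemma \ref{ch2.lem6}) is in fact slightly more careful than the paper's own treatment of these cases.
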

\begin{proof}
    Let $X\subseteq V(G \cup H)$ be any subset of cardinality $i$. We consider various cases based on the value of $i$, and for each case, we count the number of instances in which $X$ forms a mutual-visibility set of $G\vee H$ of cardinality $i$. Note that $V(G\cup H)$ cannot form a mutual-visibility set in $G\vee H$ by Corollary \ref{ch2.coro1}. Hence, the value of $i$ can be at most $m+n-1$ for $X$ to be a mutual-visibility set.\\  
Case 1: Let $1\leq i \leq m$. Let $X=A\cup B$, where $A \subseteq V(G), B \subseteq V(H)$ . In this case, the set $X$ can take exactly two possible forms. The first occurs when $A \subsetneq V(G)$ and $B \subsetneq V(H)$; the second when $A = V(G)$ and $B = \phi$; and the third when $A = \phi$ and $B = V(H)$. In the first case, $X$ is a mutual-visibility set by Lemma \ref{ch2.lem6}, while in the remaining two cases, $X$ is a mutual-visibility set by Lemma \ref{ch2.lem5}. Therefore, every subset of $ V(G\cup H)$ of cardinality $i$ is a mutual-visibility set, and the number of such sets is ${m+n\choose i}$.\\
Case 2:  Let $m+1\leq i \leq n$. In this case, the mutual-visibility set $X=A \cup B$ can take exactly three possible forms. The first form corresponds to the case where $A \subsetneq V(G)$; in this scenario, $ \sum_{k=0}^{m-1}{m \choose k}{n \choose i-k}$ mutual-visibility sets by Lemma \ref{ch2.lem6}. The second form occurs when $A=V(G)$ and $H[B]$ is a clique in $H$ of cardinality $i-m$, and the third form corresponds to $A=V(G)$ and $B$ is a mutual-visibility set of cardinality $i-m$ in $H$ and $diam_H(B) = 2$. Lemma \ref{ch2.lem7} guarantees that $X$ is a mutual-visibility set in both of these cases.  Therefore, total number of mutual-visibility sets of cardinality $i$ is given by $r_i=\sum_{k=0}^{m-1}{m \choose k}{n \choose i-k}+c_{i-m}(H) + \Theta_{i-m,2}(H)$ where  $c_{i-m}(H)$ denotes the number of cliques of size  $i-m$ in $H$, and $\Theta_{i-m,2}(H)$ denotes the number of mutual-visibility sets of size $i-m$ and diameter 2 in $H$. (Note that $\Theta_{1,2}(G)=0$)  \\
Case 3:  Let $n+1\leq i \leq m+n-2$. In this case, the set $X = A \cup B$ has five distinct forms. Three of these forms coincide with those described previously in case 2. The fourth form occurs when $B = V(H)$ and $G[A]$ is a clique in $G$ with cardinality $i - n$. The fifth form arises when $B = V(H)$ and $A$ is a mutual-visibility set of $G$ having cardinality $i-n$ with $diam_G(A) = 2$.
 Hence, the total number of mutual-visibility sets of cardinality $i$ is given by $r_i=\sum_{k=i-n+1}^{m-1}{m \choose k}{n \choose i-k}+c_{i-m}(H) +\Theta_{i-m,2}(H)+c_{i-n}(G)+\Theta_{i-n,2}(G)$. Note that we consider the situation $B = V(H)$ exclusively in the fourth and fifth forms; hence, in the summation term, the index $k$ begins from $2$ instead of $1$.\\
    Case 4:  Let $i=m+n-1$. The proof of this case is analogous to that of Case 3, with the key distinction that the summation term corresponds to subsets where $A\subsetneq V(G)$ is no longer applicable. 
\end{proof}
To demonstrate the theorem, consider the graphs $G$ and $H$ in Figure \ref{join}, both of which are not complete.

\begin{figure}[h]
    \centering
\begin{tikzpicture}
  \begin{scope}[xshift=3.5cm, rotate=10, scale=0.9]
    \def\n{6}
    \def\radius{1.5}
    \foreach \i in {1,...,\n} {    
      \node[circle, fill=black, inner sep=2pt] (R\i) at ({360/\n * (\i -1)}:\radius) {};
    }
    \foreach \i in {1,...,\n} {
      \pgfmathtruncatemacro\nexti{mod(\i,\n) + 1}
      \draw[line width=0.8pt] (R\i) -- (R\nexti);
    }
\node at (0,0) {$H$};
  \end{scope}
  \begin{scope}[rotate=-19]
  \def\m{4}
  \def\arcRadius{1.2}
  \def\startAngle{135}
  \def\endAngle{-135}
  \foreach \j [evaluate=\j as \angle using \startAngle + (\endAngle - \startAngle)*(\j-1)/(\m-1)] in {1,...,\m} {    
    \node[circle, fill=black, inner sep=2pt] (L\j) at (\angle:\arcRadius) {};
  }
  \foreach \j in {1,...,3} {
    \pgfmathtruncatemacro\nextj{\j + 1}
    \draw[line width=0.8pt] (L\j) -- (L\nextj);
  }
  \node at (-8mm,-2mm) {$G$};
  \draw[line width=0.8pt] (L1) -- (L3);
\end{scope}
\end{tikzpicture}
 \caption{The graphs $G$ and $H$}
    \label{join}
\end{figure}
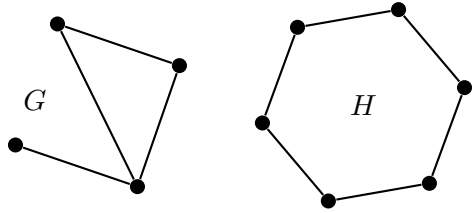
Here $m=4$ and $n=6$, and the visibility polynomial of the join of $G$ and $H$ is $\mathcal{V}(G\vee H)=\sum_{i=0}^9 r_i x^i$. For $0 \leq i \leq 4$, the number of mutual-visibility sets of size $i$ is given by $r_i={10\choose i}$. For $5\leq i \leq 9$, $r_i$ are calculated as follows: \\$r_5=\sum_{k=0}^3{4 \choose k}{6\choose 5-k}+c_1(C_6)+\Theta_{1,2}(C_6)=246+6+0=252$\\
$r_6=\sum_{k=0}^3{4 \choose k}{6\choose 6-k}+c_2(C_6)+\Theta_{2,2}(C_6)=195+6+6=207$\\
$r_7=\sum_{k=2}^3{4 \choose k}{6\choose 7-k}+c_3(C_6)+\Theta_{3,2}(C_6)+c_{1}(G)+\Theta_{1,2}(G)=96+0+2+4+0=102$\\
$r_8=\sum_{k=3}^3{4 \choose k}{6\choose 8-k}+c_4(C_6)+\Theta_{4,2}(C_6)+c_{2}(G)+\Theta_{2,2}(G)=24+0+0+4+2=30$\\
$r_9=c_5(C_6)+\Theta_{5,2}(C_6)+c_{3}(G)+\Theta_{3,2}(G)=0+0+1+1=2$\\
Some of the instances that are counted to $r_7$ and $r_9$ are depicted in Figure \ref{join_1}.

\begin{figure}[h]
  \centering
  
 \begin{subfigure}{0.45\textwidth}
    \centering
\begin{tikzpicture}

  \begin{scope}[xshift=3.5cm, rotate=10,scale=0.9]
    \def\n{6}
    \def\radius{1.5}
    \foreach \i in {1,...,\n} {
      \ifnum\i=1
        \def\mycolor{red}
      \else\ifnum\i=3
        \def\mycolor{red}
      \else\ifnum\i=5
        \def\mycolor{red}
      \else
        \def\mycolor{black}
      \fi\fi\fi
    
      \node[circle, fill=\mycolor, inner sep=2pt] (R\i) at ({360/\n * (\i -1)}:\radius) {};
    }
  
    \foreach \i in {1,...,\n} {
      \pgfmathtruncatemacro\nexti{mod(\i,\n) + 1}
      \draw[line width=0.8pt] (R\i) -- (R\nexti);
    }
  \end{scope}

  \begin{scope}[rotate=-19]
  \def\m{4}
  \def\arcRadius{1.2}
  \def\startAngle{135}
  \def\endAngle{-135}
  
  \foreach \j [evaluate=\j as \angle using \startAngle + (\endAngle - \startAngle)*(\j-1)/(\m-1)] in {1,...,\m} {
    \ifnum\j=4
      \def\mycolor{red}
    \else
      \def\mycolor{red}
    \fi

    \node[circle, fill=\mycolor, inner sep=2pt] (L\j) at (\angle:\arcRadius) {};
  }

  \foreach \j in {1,...,3} {
    \pgfmathtruncatemacro\nextj{\j + 1}
    \draw[line width=0.8pt] (L\j) -- (L\nextj);
  }
  
  \draw[line width=0.8pt] (L1) -- (L3);

  \foreach \li in {1,...,4} {
    \foreach \ri in {1,...,6} {
      \draw[dotted, line width=0.6pt] (L\li) -- (R\ri);
    }
  }
\end{scope}
\end{tikzpicture}
 \caption{$r_7$: $A=V(G)$, $B=$ MV set in $H$ with diam 2}
  \end{subfigure}
\hspace{2mm}
\begin{subfigure}{0.45\textwidth}
    \centering
\begin{tikzpicture}

  \begin{scope}[xshift=3.5cm, rotate=10, scale=0.9]
    \def\n{6}
    \def\radius{1.5}

    \foreach \i in {1,...,\n} {
      \ifnum\i=1
        \def\mycolor{red}
      \else\ifnum\i=4
        \def\mycolor{red}
      \else
        \def\mycolor{red}
      \fi\fi
    
      \node[circle, fill=\mycolor, inner sep=2pt] (R\i) at ({360/\n * (\i -1)}:\radius) {};
    }

    \foreach \i in {1,...,\n} {
      \pgfmathtruncatemacro\nexti{mod(\i,\n) + 1}
      \draw[line width=0.8pt] (R\i) -- (R\nexti);
    }
  \end{scope}

  \begin{scope}[rotate=-19]

  \def\m{4}
  \def\arcRadius{1.2}
  \def\startAngle{135}
  \def\endAngle{-135}

  \foreach \j [evaluate=\j as \angle using \startAngle + (\endAngle - \startAngle)*(\j-1)/(\m-1)] in {1,...,\m} {
    \ifnum\j=4
      \def\mycolor{black}
    \else
      \def\mycolor{red}
    \fi

    \node[circle, fill=\mycolor, inner sep=2pt] (L\j) at (\angle:\arcRadius) {};
  }
  \foreach \j in {1,...,3} {
    \pgfmathtruncatemacro\nextj{\j + 1}
    \draw[line width=0.8pt] (L\j) -- (L\nextj);
  }
  \draw[line width=0.8pt] (L1) -- (L3);
  \foreach \li in {1,...,4} {
    \foreach \ri in {1,...,6} {
      \draw[dotted, line width=0.6pt] (L\li) -- (R\ri);
    }
  }
\end{scope}
\end{tikzpicture}
 \caption{$r_9$: $G[A]=$ Clique in $G$, $B=V(H)$ }
  \end{subfigure}
    \caption{Mutual-visibility sets in $G\vee H$}
  \label{join_1}
\end{figure}
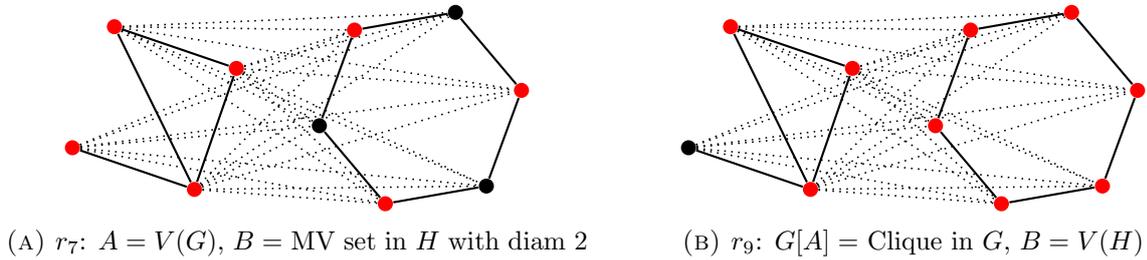
\textbf{Remark:} Since $K_{m,n}\cong \overline{K_m} \vee \overline{K_n}$, letting $G=\overline{K_m}$ and $H=\overline{K_n}$, in Theorem \ref{ch2.th6}, the result of Theorem \ref{ch2.th5} can be deduced using Vandermonde identity in combinatorics.

\section{Time Complexity of Visibility Polynomial Problem}
Di Stefano \cite{Stefano} proposed an algorithm, referred to as $MV$, which verifies in polynomial time whether a given subset $P$ of vertices in a graph $G(V, E)$ forms a mutual-visibility set.
\begin{theorem}[\cite{Stefano} Th. 3.2]\label{ch2.Th4}
    Algorithm MV solves MUTUAL-VISIBILITY TEST in $O (| P | (| V | + | E| ))$ time.
\end{theorem}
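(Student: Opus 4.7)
The plan is to design the algorithm $MV$ so that it exploits the structural characterization of mutual visibility: $P$ is a mutual-visibility set precisely when, for every ordered pair $u,v\in P$ with $u\ne v$, there exists a $u$-$v$ shortest path whose internal vertices all lie in $V\setminus P$. Under this reformulation, I would check the condition one anchor at a time: fix $u\in P$ and decide in $O(|V|+|E|)$ time whether $u$ is $P$-visible to every other vertex of $P$.

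For a fixed $u$, my algorithm would perform two breadth-first searches rooted at $u$. The first is an ordinary BFS on $G$ that computes $d_G(u,w)$ for every $w\in V$. The second is a restricted BFS on $G$ in which vertices of $P\setminus\{u\}$ are treated as terminals: when such a vertex is first reached, its BFS level is recorded but it is not re-enqueued, so its neighbours are never explored through it. Write $d'(u,w)$ for the level stamped by this restricted search. Then $u$ and $v$ are $P$-visible in $G$ if and only if $d'(u,v)=d_G(u,v)$, so the check for anchor $u$ reduces to comparing these two arrays on $P\setminus\{u\}$.

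Each of the two BFS runs costs $O(|V|+|E|)$ and the final comparison is $O(|P|)\subseteq O(|V|)$, so a single anchor is processed in $O(|V|+|E|)$ time. Iterating over all $u\in P$ then yields the claimed bound $O(|P|(|V|+|E|))$, and the algorithm reports $P$ as a mutual-visibility set if and only if the equality above holds for every pair tested.

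The most delicate point, and the step I would spell out with care, is verifying that the restricted BFS really computes the minimum length of a $u$-$v$ walk whose internal vertices avoid $P$, rather than some over- or under-estimate. The key observation is that BFS explores vertices in non-decreasing order of distance from $u$, so the first time a terminal $v\in P$ is reached, it is reached along a path of minimum length among those using no vertex of $P\setminus\{u\}$ internally; declining to expand $v$ further is harmless, since any shortest $u$-$v$ path sought has $v$ only as an endpoint, never as an internal vertex. Once this monotonicity argument is in place, the rest of the proof is bookkeeping, and the complexity bound follows immediately.
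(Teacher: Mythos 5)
Your proposal is correct: the per-anchor double-BFS (one unrestricted to get $d_G(u,\cdot)$, one in which vertices of $P\setminus\{u\}$ are stamped but not expanded) does certify $P$-visibility via the test $d'(u,v)=d_G(u,v)$, and the accounting gives exactly $O(|P|(|V|+|E|))$. Note that the paper itself offers no proof here -- Theorem~\ref{ch2.Th4} is imported verbatim from Di Stefano's paper -- and your reconstruction is essentially the same BFS-based algorithm used there, so there is nothing further to reconcile.
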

\begin{defn}\textbf{Visibility polynomial Problem:} Given a graph $G$ of order $n$, find the visibility polynomial of $G$.
\end{defn}

\begin{algorithm}[H]\LinesNumbered
\caption{V$\_$Polynomial}
\KwIn{A graph $G=(V, E)$}
\KwOut{The vector of coefficients of visibility polynomial}

$n \gets$ number of vertices of $G$\;
Initialize Counts$[0 \dots n] \gets [1,0,0,\dots,0]$\;
\For{$k \gets 1$ to $n$}{
  \For{each subset $P \subseteq V$ with $|P|=k$}
{
    \If{MV($G$, $P$) is \texttt{True}}{
      Counts[$k$] $\gets$ Counts[$k$] + 1\;
    }
  }
}
\Return Counts\;
\end{algorithm}

\begin{theorem}
    The Algorithm V$\_$Polynomial solves visibility polynomial problem in $O\bigl(|V|\,(|V|+|E|)\,2^{|V|}\bigr)$ time.
\end{theorem}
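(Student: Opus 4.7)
The plan is to analyze the algorithm by simply accounting for the total work done across all iterations of the nested loops, then invoking Theorem \ref{ch2.Th4} to bound the cost of each individual mutual-visibility test. The outer loop ranges over $k \in \{1, \dots, n\}$, and for each $k$ the inner loop enumerates all $\binom{n}{k}$ subsets of $V$ of size $k$. Since $\sum_{k=0}^{n}\binom{n}{k}=2^{n}$, the two loops together produce exactly $2^{|V|}$ subsets $P$ (one initialization plus $2^n - 1$ calls to MV).

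For each enumerated subset $P$, the algorithm performs a single call to MV$(G,P)$ and then, at most, an increment. By Theorem \ref{ch2.Th4}, each such call costs $O(|P|(|V|+|E|))$, and the increment is constant time, so the work at a subset of size $k$ is $O(k(|V|+|E|))$. Summing over all subsets gives
\[
T(n) \;=\; \sum_{k=0}^{n}\binom{n}{k}\,O\!\bigl(k(|V|+|E|)\bigr) \;=\; O\!\bigl((|V|+|E|)\bigr)\sum_{k=0}^{n} k\binom{n}{k} \;=\; O\!\bigl((|V|+|E|)\cdot n\,2^{n-1}\bigr),
\]
using the identity $\sum_{k=0}^{n}k\binom{n}{k}=n\,2^{n-1}$. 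Since $n=|V|$, this simplifies to $O\bigl(|V|(|V|+|E|)\,2^{|V|}\bigr)$.

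A slightly coarser but equally valid route is to upper-bound $|P| \le |V|$ uniformly, so that every call to MV costs at most $O(|V|(|V|+|E|))$; multiplying by the $2^{|V|}$ total subsets immediately yields the claimed bound without the identity above. There is no real obstacle here; the only point requiring care is the bookkeeping that the nested loops together range over all subsets exactly once, so that the geometric sum $2^{|V|}$ (rather than, say, $n \cdot 2^{|V|}$) appears as the dominant factor. The subset enumeration cost itself is absorbed into the MV call cost, completing the argument.
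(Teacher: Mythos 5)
Your argument is correct and follows essentially the same route as the paper: enumerate all $\binom{n}{k}$ subsets of each size $k$, charge $O(k(|V|+|E|))$ per MV call via Theorem \ref{ch2.Th4}, and apply the identity $\sum_{k} k\binom{n}{k} = n\,2^{n-1}$ to obtain $O\bigl(|V|(|V|+|E|)\,2^{|V|}\bigr)$. The coarser alternative you mention ($|P|\le |V|$ uniformly) is a fine shortcut but not needed; nothing further to add.
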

\begin{proof}
The running time of the algorithm $MV$ on the input graph $G$ and subset $P$ of $V$ is $ O\bigl(\,|P| (|V| + |E|)\bigr)$, by Theorem \ref{ch2.Th4}. Let $n = |V|$ and $m = |E|$. The algorithm $MV$ iterates over all subsets $P$ of size $k$, for each $k = 1,\ldots,n$. For each fixed $k$, there are $\binom{n}{k}$ subsets of size $k$. Thus, the total work over all subsets is:
$$
\sum_{k=1}^{n} \binom{n}{k} O\bigl(k\,(n + m)\bigr) = O(n+m) \sum_{k=1}^{n} \binom{n}{k}\,k = O(n+m)n\,2^{n-1}
$$
 Therefore, the total time complexity is 
$O\bigl(|V|(|V|+|E|)\,2^{|V|}\bigr).$
\end{proof}
\textbf{Concluding remarks:}
Since the algorithm for computing the visibility polynomial of a graph has a time complexity of $O(n^32^n)$, it becomes computationally intensive even for graphs of moderate size, underscoring the importance of theoretical investigations and structural characterizations. The distinctive properties of classes of non-isomorphic graphs sharing the same visibility polynomial can be investigated. It would be interesting to investigate the relationship between the roots of the visibility polynomial and the structural properties of the corresponding graph. \\

\textbf{Data availability:} All non-isomorphic graphs of order upto nine were generated using the nauty27r3 software package.\\


\begin{thebibliography}{9}
\bibitem{Stefano}Gabriele Di Stefano, \textit{Mutual visibility in graphs}, Applied Mathematics and Computation. 419, 126850 (2022). 10.1016/j.amc.2021.126850.
\bibitem{Geo_convex_1}A. Rosenfeld and A. Y. Wu. \textit{Geodesic convexity in discrete spaces}, Journal of Information Sciences, 80(1-2):127–132, 1994.
\bibitem{Geo_convex_2}A. Y. Wu and A. Rosenfeld. \textit{Geodesic visibility in graphs},  Journal of Information Sciences, 108(1-4):5–12, 1998.
\bibitem{robotics1}Cicerone, S., Di Fonso, A., Di Stefano, G., Navarra, A. (2023). \textit{Time-Optimal Geodesic Mutual Visibility of Robots on Grids Within Minimum Area}. In: Dolev, S., Schieber, B. (eds) Stabilization, Safety, and Security of Distributed Systems. SSS 2023. Lecture Notes in Computer Science, vol 14310. Springer, Cham.
  
\bibitem{robotics2}G.A. Di Luna, P. Flocchini, S.G. Chaudhuri, F. Poloni, N. Santoro, G. Viglietta, Mutual visibility by luminous robots without collisions, Information and Computation. 254 (2017) 392–418, doi:10.1016/j.ic.2016.09.005.

\bibitem{robotics3}Subhash Bhagat, PaulomiDey, RajarshiRay, \textit{Collision-Free Linear Time Mutual Visibility for Asynchronous Fat Robots}, Proceedings of the 25th International Conference on Distributed Computing and Networking, doi:10.1145/3631461.3631544, 2024.

\bibitem{robotics4}Alsaedi, R.J., Gudmundsson, J., van Renssen, A. (2023). The Mutual Visibility Problem for Fat Robots. In: Morin, P., Suri, S. (eds) Algorithms and Data Structures. WADS 2023. Lecture Notes in Computer Science, vol 14079. Springer, Cham. 

\bibitem{robotics5}Cicerone, S., Di Fonso, A., Di Stefano, G., Navarra, A.: The geodesic mutual visibility problem for oblivious robots: The case of trees. ICDCN ’23: Proceedings of the 24th International Conference on
Distributed Computing and Networking, 150–159 (2023).

\bibitem{robotics6}G. Sharma, Mutual visibility for robots with lights tolerating light faults, in: 2018 IEEE International Parallel and Distributed Processing Symposium Workshops, IEEE Computer Society, 2018, pp. 829–836, doi:10.1109/IPDPSW.2018.00130.

\bibitem{robotics7}A. Aljohani, P. Poudel, G. Sharma, Complete visitability for autonomous robots on graphs, in: IEEE International Parallel and Distributed Processing
Symposium, IPDPS, IEEE Computer Society, 2018, pp. 733–742, doi:10.1109/IPDPS.2018.00083

\bibitem{GP_1}Manuel, P., Klavžar, S.: A general position problem in graph theory. Bull. Aust. Math. Soc. 98, 177–187 (2018).

\bibitem{GP_3}U. Chandran S.V., G.J. Parthasarathy, The geodesic irredundant sets in graphs, Int. J. Math. Combin. 4 (2016) 135–143.

\bibitem{GP_2}Iršič, S. Klavžar, G. Rus, J. Tuite, General position polynomials, Results Math. 79 (2024) Paper 110. doi:10.1007/s00025-024-02133-3

\bibitem{MV_1}Korže, D., Vesel, A.: Mutual-Visibility Sets in Cartesian Products of Paths and Cycles. Results Math. 79, 116 (2024).
\bibitem{MV_2}Korže, D., Vesel, A.: Variety of mutual-visibility problems in hypercubes. Appl Math Comput. 491, 129218
(2025).doi:10.1016/j.amc.2024.129218
\bibitem{MV_3}Tian, J., Klavžar, S.: Graphs with total mutual-visibility number zero and total mutual-visibility in Cartesian products. Discuss. Math. Graph Theory 44, 1277–1291 (2024). doi:10.7151/dmgt.2496
\bibitem{MV_4}M. Axenovich, D. Liu, Visibility in hypercubes, arXiv preprint, doi:10.48550/arXiv.2402.04791 (2024).

\bibitem{MV_7} S. Cicerone, A. Di Fonso, G. Di Stefano, A. Navarra, F. Piselli, Mutual visibility in hypercube-like graphs, Structural Information and Communication Complexity. SIROCCO 2024

\bibitem{MV_5} S. Cicerone, G. Di Stefano, S. Klavžar, I.G. Yero, Mutual-visibility problems on graphs of diameter two, Eur. J. Comb. 120 (2024) 103995.

\bibitem{MV_6}B. Brešar, I.G. Yero, Lower (total) mutual-visibility number in graphs, Appl. Math. Comput. 456 (2024) 128411.doi: 10.1016/j.amc.2023.128411

\bibitem{MV_8}G. Boruzanlı Ekinci, Cs. Bujtás, Mutual-visibility problems in Kneser and Johnson
graphs, Ars. Math. Contemp. (2024) doi.org/10.26493/1855-3974.3344.4c8.

\bibitem{MV_9}S. Cicerone, G. Di Stefano, S. Klavžar, On the mutual-visibility in Cartesian products and in triangle-free graphs, Appl. Math. Comput. 438 (2023) Paper 127619. doi: 10.1016/j.amc.2022.127619

\bibitem{TMV_1}Kuziak, D., Rodríguez-Velázquez, J.A.: Total mutual-visibility in graphs with emphasis on lexicographic and Cartesian products. Bull. Malays. Math. Sci. Soc. 46, 197 (2023). doi: 10.1007/s40840-023-01590-3

\bibitem{TMV_2}Bujtás, Cs., Klavžar, S., Tian, J.: Total mutual-visibility in Hamming graphs. Opuscula Math 45, 63–78 (2025).

\bibitem{sandi}Csilla Bujtása, Sandi Klavžara, Jing Tiand, \textit{Visibility polynomials, dual visibility spectrum, and
characterization of total mutual-visibility sets},arXiv:2412.03066v1 [math.CO], Dec 2024.

\bibitem{Harary}F. Harary , Graph Theory, Addison-Wesley, 1969 .
\end{thebibliography}
\end{document}